\newtheorem{theorem}{Theorem}[section]
\newtheorem{lemma}[theorem]{Lemma}
\theoremstyle{definition}
\newtheorem{remark}[theorem]{Remark}
\theoremstyle{definition}
\theoremstyle{remark}
\newcommand{\dis}{\displaystyle}
\newcommand{\Jj}{\mathcal J}
\newcommand{\PP}{\mathcal P}
\newcommand{\NN}{\mathbb N}
\newcommand{\zN}{\mathbb N}
\newcommand{\CC}{\mathbb C}
\newcommand{\DD}{\mathbb D}
\newcommand{\RR}{\mathbb R}
\newcommand{\jj}{\mathbf{j}}
\newcommand{\ii}{\mathbf{i}}
\DeclareMathOperator{\mon}{mon}
\newcommand{\jbn}[1]{\mathcal J(#1,n)}
\newcommand{\bj}{\mathbf j}
\newcommand{\Pp}{\mathcal{P}}
\newcommand{\Rr}{\mathcal{R}}
\newcommand{\dom}{\mbox{dom}}
\begin{document}
	\title{Mixed Bohr radius  in several variables}
	
	\author{Daniel Galicer \and Mart\'in Mansilla \and Santiago Muro}
	\thanks{This work was partially supported by projects CONICET PIP 1122013010032,   ANPCyT PICT 2015-2224,  ANPCyT PICT 2015-2299, 
		UBACyT 20020130300057BA, UBACyT20020130300052BA,UBACyT 20020130100474B
		. The second author was supported by a CONICET doctoral fellowship.}

	\address{ Departamento de Matem\'{a}tica - Pab I,
		Facultad de Cs. Exactas y Naturales, Universidad de Buenos Aires
		(1428) Buenos Aires, Argentina and CONICET} \email{dgalicer@dm.uba.ar} \email{mmansilla@dm.uba.ar}
	\email{smuro@dm.uba.ar}
	\keywords{Bohr radius, power series, homogeneous polynomials, unconditional bases, domains of convergence for monomial expansions}
	\subjclass[2010]{32A05,32A22 (primary),46B15,46B20,46G25,46E50 (secondary)}
	
	
	\begin{abstract}
		Let $K(B_{\ell_p^n},B_{\ell_q^n}) $ be the $n$-dimensional $(p,q)$-Bohr radius for holomorphic functions on $\CC^n$. That is, $K(B_{\ell_p^n},B_{\ell_q^n}) $ denotes the greatest constant $r\geq 0$ such that for every entire function $f(z)=\sum_{\alpha} c_{\alpha} z^{\alpha}$ in $n$-complex variables, we have the following (mixed) Bohr-type  inequality
		$$\sup_{z \in  r \cdot B_{\ell_q^n}} \sum_{\alpha} \vert c_{\alpha} z^{\alpha} \vert \leq \sup_{z \in  B_{\ell_p^n}} \vert f(z) \vert,$$
		where  $B_{\ell_r^n}$ denotes the closed unit ball of the $n$-dimensional sequence space $\ell_r^n$.
		
		For every $1 \leq p, q \leq \infty$, we exhibit the exact asymptotic growth of the $(p,q)$-Bohr radius as $n$ (the number of variables) goes to infinity. 
	\end{abstract}

	\maketitle

	\section{Introduction}

 At the early twentieth century, during the course of his investigations on the famous  Riemann $\zeta$ function, Harald Bohr \cite{bohr1913ueber,bohr1914theorem} devoted great efforts in the study and development of a general theory of Dirichlet series. A Dirichlet series is just an expression  of the form
 $$D(s) = \sum_{n \geq 1} \frac{a_n}{n^s},$$
 where $a_n \in \CC$ and $s= \sigma + it$ is a complex variable. The regions of convergence, absolute convergence and uniform convergence of these series define half-planes of the form $[ Re(s) > \sigma_0 ]$ in the complex field. Bohr was mainly interested in controlling the region of convergence of a series. To achieve this, he related different types of convergence and focused on finding the width of the greatest strip for which a Dirichlet series can converge uniformly but not absolutely. This question is popular and known nowadays as the \emph{Bohr's absolute convergence problem}.

 Although the solution of this problem problem appeared two decades after it was proposed (given by Bohnenblust and Hille \cite{bohnenblust1931absolute} who showed  that the maximum width of this strip is $\frac{1}{2}$), Bohr made major contributions in the area (arguably, even more important than the solution of the problem itself) in order to tackle it. He discovered a deep connection among Dirichlet series and power series in infinitely many variables. Given a Dirichlet series $D(s)=\sum_{n \geq 1} \frac{a_n}{n^s}$, he considered for each $n \in \NN$ the prime decomposition $n = p_1^{\alpha_1} \dots p_r^{\alpha_r}$ (where $(p_k)_{k \in \NN}$ denote the sequence formed by the ordered primes) and defined $z= (p_1^{-s}, \dots, p_r^{-s})$. Thus,
 $$D(s)=\sum_{n \geq 1} a_n (p_1^{-s})^{\alpha_1}  \dots (p_r^{-s})^{\alpha_r} = \sum a_n z_1^{\alpha_1} \dots z_r^{\alpha_r}.$$

 This correspondence, known as the \emph{Bohr transform} is not just formal: it gives an isometry between suitable spaces of Dirichlet series and power series \cite{hedenmalm1997hilbert}. The Bohr transform allows to transform/translate problems about Dirichlet series in terms of power series and tackle them with complex analysis techniques.
 This cycle of ideas brought Bohr to ask whether is possible to compare the absolute value of a power series in one complex variable with the sum of the absolute value of its coefficients. He manged to prove the following result nowadays referred as \emph{Bohr's inequality}:

{\sl The radius $r=\frac{1}{3}$ is the largest value for which the following inequality holds:

\begin{equation}
\sum_{n \geq 0} |a_n| r^n \leq \sup_{z \in \DD} \vert \sum_{n \geq 0} a_n z^n \vert,
\end{equation}
for every entire function $f(z)= \sum_{n \geq 0} a_n z^n$ on the unit disk $\DD$ such that $\sup_{z \in \DD} \vert f(z) \vert < \infty$.

}

As a matter of fact, Bohr's paper \cite{bohr1914theorem}, compiled by G. H. Hardy from correspondence, indicates that
Bohr initially obtained the radius $\frac{1}{6}$, but this was quickly improved to the sharp
result by M. Riesz, I. Schur, and N. Wiener, independently. Bohr's article presents
both his own proof and the one of his colleagues.

 This interesting inequality was overlooked during many years until the end of the twentieth century. In particular, Dineen and Timoney \cite{dineen1989absolute}, Dixon \cite{dixon1995banach}, Boas and Khavinson \cite{khavinson1997bohr}, Aizenberg \cite{aizenberg2000multidimensional} and Boas \cite{boas2000majorant} retook this work and use it in different contexts and/or generalize it. Several of these authors analyzed if a similar phenomenon occurs for power series in many variables. For each Reinhardt domain $\mathcal R$, they introduced the notion of the \emph{Bohr radius} $K(\mathcal R)$ as the biggest $r\geq 0$ such that for every analytic function $f(z)=\sum_{\alpha} a_{\alpha} z^{\alpha}$ bounded on $\mathcal R$, it holds:

\begin{equation}
\sup_{z \in r \cdot \mathcal R } \sum_{\alpha} \vert a_{\alpha} z^{\alpha} \vert \leq \sup_{z \in \mathcal R} \vert f(z) \vert.
\end{equation}

Note that with this notation, Bohr's inequality can be formulated simply as $K(\DD) = \frac{1}{3}$. Surprisingly, the exact value of the Bohr radius is unknown for any other domain. The central results of \cite{khavinson1997bohr,boas2000majorant} contained a (partial) successful estimate for the Bohr radius for the complex unit  balls  of $\ell_p^n$, $1\leq p \leq \infty$.

The gap between the upper and lower estimates in this papers leaded many efforts to compute the exact asymptotic order of $K(B_{\ell_p^n})$, for $1 \leq p \leq \infty$.

To obtain the upper bounds Boas \cite{boas2000majorant} generalized in a very ingenious way a theorem of Kahane-Salem-Zygmund on random trigonometric polynomials \cite[Theorem 4 in Chapter 6]{kahane1993some}, which gives (by the use of a probabilistic argument) the existence of homogeneous polynomials with  ``large coefficients'' and uniform norm  ``relatively small''.
This technique (and some refinements of it, for example \cite{defant2004maximum,bayart2012maximum}) do the work when dealing with upper bounds.

The lower bound is a horse of a different color. In \cite{defant2003bohr} Defant, Garc\'ia and Maestre related the Bohr radius with some non-elementary concepts of the local theory of Banach spaces: unconditionality in spaces of homogeneous polynomials via some Banach-Mazur distance estimates. Although at that moment this did not give optimal asymptotic bounds, it started a way with which $K(B_{\ell_p^n})$ would be obtained.

They were Defant, Frerick, Ortega-Cerd\'a, Ouna{\"i}es and Seip \cite{defant2011bohnenblust} who made an incredible contribution in the problem and managed to exhibit the exact asymptotic value of $K(B_{\ell_{\infty}^n})$.
The authors involved again into the game the classical Bohnenblust-Hille inequality, which was used to compute Bohr's convergence width eighty years before. This inequality asserts that the $\ell_{\frac{2m}{m+1}}$-norm of the coefficients of a given $m$-homogeneous polynomial in $n$-complex variables is bounded by a constant \emph{independent of $n$} times its supreme norm on the polydisk. Precisely, given $m \in \NN$. there is a constant $C_m>0$ such that for every $m$-homogeneous polynomial $P(z)=\sum_{\vert \alpha \vert = m} a_{\alpha} z^{\alpha}$ in $\CC^n$,

\begin{equation}
\left( \sum_{\vert \alpha \vert =m} \vert a_{\alpha} \vert^{\frac{2m}{m+1}} \right)^{\frac{m+1}{2m}}\leq C_m \sup_{z \in \DD^n} \vert P(z) \vert.
\end{equation}

The groundbreaking progress consisted in showing that $C_m$ is in fact hypercontractive; that is, $C_m$ can be taken less than or equal to $C^m$ for some absolute constant $C>0$. With this at hand they proved that $K(B_{\ell_{\infty}^n})$ behaves asymptotically as $\sqrt{\frac{\log(n)}{n}}$ (other cornerstone of the paper is that they have also described the  Sidon constant for the set of frequencies $\{\log (n) : n \text{ a positive integer } \leq N \}$).
This paper, arguably in some sense, marked the path of the whole area over the last years.
In fact much more can be said about $K(B_{\ell_{\infty}^n})$: Bayart, Pellegrino and Seoane \cite{bayart2014bohr} managed to push these techniques further in an amazingly ingenious way to obtain that  $\lim_{n\to \infty}  \frac{K(B_{\ell_{\infty}^n})}{\sqrt{\frac{\log(n)}{n}}}=1$.

Since $K(B_{\ell_{\infty}^n})$ bounds from below the radius $K(\mathcal R)$ for any other Reinhardt domain $\mathcal R$, the range where $p \geq 2$ easily follows. The solution of the case $p<2$, required  quite different  methods. A celebrated theorem proved independently by Pisier \cite{pisier1986factorization} and Sch{\"u}t \cite{schutt1978unconditionality} allows to study unconditional bases in spaces of multilinear forms in terms of some invariants such as the local unconditional structure or the Gordon-Lewis property.
These results have their counterpart in the context of spaces of polynomials as shown in \cite{defant2001unconditional}, replacing the full tensor product by the symmetric one.

Defant and Frerick \cite{defant2011bohr} (continuing their previous work given in \cite{defant2006logarithmic}) established some sort of extension of Pisier-Sch{\"u}t result to the symmetric tensor product with accurate bounds and gave a new estimate on the Gordon-Lewis constant of the symmetric tensor product. As a byproduct, they found the exact asymptotic growth for the Bohr radius on the unit ball of the spaces $\ell_p^n$.

The aforementioned results give the following relation for the Bohr radius.

\begin{theorem}{\cite{defant2011bohnenblust,defant2011bohr}} \label{crecimiento radio de Bohr} For $1 \leq p \leq \infty$, we have
\begin{equation}
K(B_{\ell_p^n}) \sim \left( \frac{\log(n)}{n}\right)^{1- \frac{1}{\min\{p,2\}}}.
\end{equation}

\end{theorem}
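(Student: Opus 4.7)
The plan is to establish matching upper and lower asymptotic bounds, each requiring distinct tools. The upper bound rests on a Kahane--Salem--Zygmund probabilistic construction of polynomials with large coefficients and comparatively small supremum norm, while the lower bound reduces to estimating hypercontractively the monomial-unconditional basis constants of the spaces of $m$-homogeneous polynomials on $\ell_p^n$.

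For the upper bound, I would invoke the $B_{\ell_p^n}$ version of the Kahane--Salem--Zygmund inequality (as developed by Boas, Defant and coauthors): for every $m$ and $n$ there exists an $m$-homogeneous polynomial $P_m(z) = \sum_{|\alpha|=m}\varepsilon_\alpha z^\alpha$ with $|\varepsilon_\alpha|=1$ and sharp control of $\|P_m\|_{B_{\ell_p^n}}$ by a product of powers of $n$, $m$ and a $\sqrt{\log m}$ factor. Testing Bohr's inequality at the symmetric point $z_0 = r(1,\dots,1)/n^{1/p}$, which lies on the boundary of $rB_{\ell_p^n}$, forces the left-hand side to equal $r^m \binom{n+m-1}{m}n^{-m/p}$. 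Combining this lower bound on the left-hand side with the KSZ bound on the right-hand side and optimizing over $m \sim \log n$ via Stirling yields
\[
K(B_{\ell_p^n}) \lesssim \left(\frac{\log n}{n}\right)^{1 - 1/\min\{p,2\}}.
\]

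For the lower bound, decompose a bounded entire $f$ as $f = \sum_m P_m$ with $\|P_m\|_{B_{\ell_p^n}} \leq \|f\|_{B_{\ell_p^n}}$ via Cauchy's formula, and let $\chi_m := \chi_{\mon}(\Pp(^m \ell_p^n))$ denote the monomial-unconditional basis constant. By $m$-homogeneity,
\[
\sup_{z \in rB_{\ell_p^n}} \sum_\alpha |c_\alpha z^\alpha| \leq \sum_m r^m \chi_m \|P_m\|_{B_{\ell_p^n}} \leq \|f\|_{B_{\ell_p^n}} \sum_m \bigl(r\,\chi_m^{1/m}\bigr)^m,
\]
from which $K(B_{\ell_p^n}) \gtrsim \inf_m \chi_m^{-1/m}$ after absorbing the geometric series with $r\,\chi_m^{1/m}\leq \tfrac12$. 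For $p \geq 2$, I would estimate $\chi_m$ via the hypercontractive Bohnenblust--Hille inequality of \cite{defant2011bohnenblust}: H\"older with exponent $\tfrac{2m}{m+1}$ combined with the inclusion $B_{\ell_\infty^n} \subseteq n^{1/p}B_{\ell_p^n}$ yields $\chi_m \leq C^m (n/\log n)^{m(1 - 1/\min\{p,2\})}$. For $1 \leq p < 2$, direct arguments fail and one instead relies on the Defant--Frerick bound from \cite{defant2011bohr}, obtained through a Pisier--Sch\"utt-type theorem for symmetric tensor products together with a sharp Gordon--Lewis estimate on $\Pp(^m \ell_p^n)$.

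The principal obstacle is the hypercontractivity itself: both the Bohnenblust--Hille constants and the monomial-unconditional constants $\chi_m$ must be shown to grow at most as $C^m$ (and not at a superexponential rate), and it is precisely this sharp exponential control that produces the logarithmic improvement over the trivial $n^{m(1-1/\min\{p,2\})}$ bound on $\chi_m$. For $p \geq 2$ this is the deep achievement of \cite{defant2011bohnenblust}; for $p < 2$ the symmetric nature of the tensor product makes a direct Pisier--Sch\"utt argument ineffective, and \cite{defant2011bohr} supplies the necessary inductive bootstrap for the Gordon--Lewis constant of $\Pp(^m \ell_p^n)$.
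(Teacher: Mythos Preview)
The theorem you are addressing is not proved in the present paper: it is quoted from \cite{defant2011bohnenblust,defant2011bohr} and used as a black box (notably in the lower bound for the case $p\le q$ in Section~\ref{lower}). So there is no ``paper's own proof'' to compare against; what the paper does offer is a narrative in the introduction of how the cited references obtain the result, and your sketch matches that narrative faithfully: KSZ-type random polynomials for the upper bound, the link $K\sim (\sup_m \chi_m^{1/m})^{-1}$ for the lower bound, hypercontractive Bohnenblust--Hille for $p\ge 2$, and the Defant--Frerick Gordon--Lewis/Pisier--Sch\"utt machinery for $p<2$.

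Two small remarks. First, your geometric-series reduction $K\gtrsim \inf_m \chi_m^{-1/m}$ is correct asymptotically, but to get the clean constant one needs Wiener's lemma \eqref{Winer} rather than the bare Cauchy estimate $\|P_m\|\le\|f\|$; the paper records this as Lemma~\ref{Bohr vs unc}. Second, your line ``H\"older $+$ BH yields $\chi_m\le C^m(n/\log n)^{m(1-1/\min\{p,2\})}$'' slightly misplaces the logarithm: hypercontractive BH plus H\"older gives $\chi_m(\ell_\infty^n)\le C^m(n/m)^{(m-1)/2}$, and the $\log n$ appears only after checking that $\sup_m \chi_m^{1/m}\ll (n/\log n)^{1/2}$ uniformly in $m$ (the worst $m$ being of order $\log n$). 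Finally, the paper points out that for $1\le p<2$ there is now a more elementary route avoiding Gordon--Lewis constants, via the monomial-set unconditionality estimates of Bayart--Defant--Schl\"uters \cite{bayart2016monomial}; this is in fact the toolkit the paper itself adapts in Lemma~\ref{cteincondicionalidad} and Lemmas~\ref{cota jj 1}--\ref{cota jj} for the mixed case.
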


The proof of the exact asymptotic behavior of $K(B_{\ell_p^n})$ given in \cite{defant2011bohr} for $p<2$ as mentioned before use ``sophisticated machinery'' from the Banach space theory. Inspired by recent results from the
general theory of Dirichlet series, in \cite{bayart2016monomial} Bayart, Defant and Schl{\"u}ters managed to give upper estimates for the unconditional basis constants of spaces of polynomials on $\ell_p$ spanned by finite sets of monomials, which avoid the use of this ``machinery''. This perspective gives a new and, in a sense, clear proof of Theorem~\ref{crecimiento radio de Bohr} for the case $p<2$.

The study of the Bohr radius together with the techniques developed for this purpose have been enriching many mathematical areas such us number theory \cite{mathematicum2015dirichlet,defant2011bohnenblust}, complex analysis \cite{bayart2017multipliers,bayart2016monomial,defant2009domains}, operator algebras \cite{paulsen2002bohr,dixon1995banach},	 random polynomials \cite{boas2000majorant} (together with several works influenced by this article such as  \cite{defant2004maximum,bayart2012maximum,galicer2015vonNeumann}), the study of functions on the boolean cube \cite{defant2017bohr,defant2017fourier} (which are fundamental in theoretical computer science, graph theory, social choice, etc.) and even in quantum information \cite{defant2017fourier,montanaro2012some}. 

Our aim is to continue the study of the Bohr phenomenon for mixed Reinhardt domains.
Let $\mathcal R$ and $\mathcal S$ be two Reinhardt domain in $\CC^n$. The mixed Bohr radius $K(\mathcal R,\mathcal S)$ is defined as the biggest number $r\geq 0$ such that for every analytic function $f(z)=\sum_{\alpha} a_{\alpha} z^{\alpha}$ bounded on $\mathcal R$, it holds:

\begin{equation}
\sup_{z \in r \cdot \mathcal S} \sum_{\alpha} \vert a_{\alpha} z^{\alpha} \vert \leq \sup_{z \in \mathcal R} \vert f(z) \vert.
\end{equation}

We will focus in the case where $\mathcal R$ and $\mathcal S$ are the closed unit balls of $\ell_p$ and $\ell_q$ for $1 \leq p, q \leq \infty$. Note that $K(B_{\ell_p^n})$ in the previous notation is just $K(B_{\ell_p^n}, B_{\ell_p^n})$.
Our contribution is the following theorem which provides the correct asymptotic estimates for the full range of $p$'s and $q$'s.

\begin{theorem}\label{maintheorem}
	Let $1 \leq p, q \leq \infty$, with $q \neq 1$. The asymptotic growth of the $(p,q)$-Bohr radius is given by
		\[K(B_{\ell_p^n},B_{\ell_q^n})  \sim \begin{cases}
		1 & \text{ if (I): } 2 \leq p \leq \infty \; \wedge \; \frac{1}{2} + \frac{1}{p} \le \frac{1}{q},\\
		\frac{\sqrt{\log(n)}} {n^{\frac{1}{2} + \frac{1}{p} - \frac{1}{q}}} & \text{ if (II): } 2 \leq p \leq \infty \; \wedge \; \frac{1}{2} + \frac{1}{p} \ge \frac{1}{q}, \\
		\frac{\log(n)^{1 - \frac{1}{p}}}{n^{1-\frac{1}{q}}}  & \text{ if (III): } 1 \leq p , q \leq 2. \\
		\end{cases}
		\]

		For $q=1$ and every $1 \le p \le \infty$, $K(B_{\ell_p^n},B_{\ell_q^n})  \sim 1$.
\end{theorem}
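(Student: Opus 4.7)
The plan is to reduce the computation of $K(B_{\ell_p^n},B_{\ell_q^n})$ to the two-sided control of the mixed monomial unconditional-basis constants
\[
\chi_m(p,q) := \sup\Bigl\{\,\sup_{z\in B_{\ell_q^n}}\!\!\sum_{|\alpha|=m}|a_\alpha z^\alpha|\;:\;P=\sum_{|\alpha|=m}a_\alpha z^\alpha,\;\|P\|_{\ell_p^n}\le 1\Bigr\}.
\]
By the by-now-standard Boas--Khavinson ``extraction'' scheme (decompose $f$ into its homogeneous components $P_m$, use a Cauchy-type projection on the torus to control $\|P_m\|_{\ell_p^n}\le\|f\|_{\ell_p^n}$, and recombine the resulting estimates via the one-variable Bohr inequality), one obtains
\[
K(B_{\ell_p^n},B_{\ell_q^n})\;\asymp\;\inf_{m\ge 1}\chi_m(p,q)^{-1/m},
\]
up to a universal constant. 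The problem thus reduces to matching two-sided estimates on $\chi_m(p,q)$, with the critical degree sitting at $m\sim\log n$ in Cases (II) and (III).

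For the upper bounds on $K$ (equivalently, lower bounds on $\chi_m$) I would use a Kahane--Salem--Zygmund random polynomial construction in the spirit of Boas \cite{boas2000majorant} and its sharpenings \cite{defant2004maximum,bayart2012maximum}: for the critical degree $m$, a random $\pm 1$ combination $P=\sum_{\alpha\in A_m}\varepsilon_\alpha z^\alpha$ (over all multi-indices of length $m$ in Case (II), and over multilinear ones weighted by $\sqrt{m!/\alpha!}$ in Case (III)) has small expected $\|\cdot\|_{\ell_p^n}$ by standard concentration/entropy arguments, while evaluation at the equal-coordinate vector $z=n^{-1/q}(1,\ldots,1)\in B_{\ell_q^n}$ produces a large deterministic value of $\sum|z^\alpha|$. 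Taking $m$-th roots and optimising in $m$ produces the claimed upper bound on $K$ in each of the three cases.

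For the lower bounds on $K$ I would proceed case by case. In Case (II) ($p\ge 2$, $\tfrac12+\tfrac1p\ge\tfrac1q$) I would combine the hypercontractive polynomial Bohnenblust--Hille inequality on $\ell_p^n$ (from \cite{defant2011bohnenblust} and its $p<\infty$ extensions), giving $(\sum|a_\alpha|^{s})^{1/s}\le C^m\|P\|_{\ell_p^n}$ for a suitable exponent $s=s(p,m)$, with a conjugate Hölder estimate on $(\sum|z^\alpha|^{s'})^{1/s'}$ over $B_{\ell_q^n}$ --- which follows from the identity $\sum (m!/\alpha!)|z^\alpha|^{s'}=\|z\|_{s'}^{s'm}$ and the inclusion $\ell_q^n\hookrightarrow\ell_{s'}^n$; optimising in $m\sim \log n$ yields the rate $\sqrt{\log n}/n^{1/2+1/p-1/q}$. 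In Case (III) ($p,q\le 2$), Bohnenblust--Hille is unavailable; instead I would adapt the monomial-sum technique of Bayart--Defant--Schl{\"u}ters \cite{bayart2016monomial} (which bypasses the Pisier--Sch{\"u}tt machinery entirely) to the mixed setting, iteratively factoring $P$ along blocks of variables and summing the resulting monomial norms on the $\ell_q^n$-ball, with careful bookkeeping producing exactly $\log(n)^{1-1/p}/n^{1-1/q}$. Case (I) is handled by the same Hölder/Hardy--Littlewood argument as in Case (II), now observing that the hypothesis $\tfrac12+\tfrac1p\le\tfrac1q$ forces all the $n$-powers to cancel exactly, so that $\chi_m\le C^m$ and hence $K\gtrsim 1$. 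The separate case $q=1$ is the softest of all: the positive-coefficient polynomial $w\mapsto\sum|a_\alpha|w^\alpha$ attains its maximum on $\{w\ge 0,\,\|w\|_1\le 1\}$ at a vertex of the simplex, whence $\chi_m\le 1$ uniformly in $m$ and $K\sim 1$.

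The main obstacle will be Case (III). The standard polynomial tools available for $p\ge 2$ (Wiener trick, Bohnenblust--Hille) break down in this range, and the monomial approach of \cite{bayart2016monomial} was tailored to the diagonal case $p=q$; extending it to the mixed setting requires tracking the two exponents $1-\tfrac1p$ (on the coefficient side) and $1-\tfrac1q$ (on the evaluation side) in parallel throughout. On the upper-bound side, matching this same rate via Kahane--Salem--Zygmund forces the use of \emph{weighted multilinear} monomials, so that $\sum|z^\alpha|$ picks up exactly $n^{m(1-1/q)}$ at the extremal $z$ and the two estimates balance precisely at $m\sim\log n$.
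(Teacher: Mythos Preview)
Your overall architecture (reduce to $\sup_m\chi_m^{1/m}$ via the Boas--Khavinson/Wiener scheme, upper bounds by Kahane--Salem--Zygmund polynomials, lower bounds case by case) is exactly right and matches the paper. But there is a genuine gap in your Case~(II) lower bound, and a smaller error in your $q=1$ argument.

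\textbf{Case (II): Bohnenblust--Hille does not produce the logarithm.} Carrying out your scheme literally---BH gives $(\sum|a_\alpha|^{s})^{1/s}\le C^m\|P\|_{\ell_p^n}$ with $s=2m/(m+1)$, then H\"older and the multinomial identity give $(\sum_\alpha|z^\alpha|^{s'})^{1/s'}\le\|z\|_{s'}^{m}$, and finally $\ell_q^n\hookrightarrow\ell_{s'}^n$ gives $\|z\|_{s'}\le n^{1/s'-1/q}$---one obtains
\[
\chi_m^{1/m}\;\le\;C\,n^{\frac{m-1}{2m}-\frac1q}.
\]
This bound is \emph{increasing} in $m$, with limit $C\,n^{1/2-1/q}$; hence $\sup_m\chi_m^{1/m}\le Cn^{1/2-1/q}$ and you only get $K\gtrsim n^{1/q-1/2}$, with no $\sqrt{\log n}$. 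Your phrase ``optimising in $m\sim\log n$'' is the right move for the \emph{upper} bound on $K$ (where random polynomials give a bound that is bad for small $m$ and improves as $m$ grows), but in the \emph{lower}-bound direction there is nothing to optimise: you need a bound on $\chi_m^{1/m}$ valid for \emph{every} $m$, and the worst $m$ is $m\to\infty$. The paper obtains the missing logarithm not from BH but from the sharper monomial-convergence result of Bayart--Defant--Frerick--Maestre--Sevilla \cite{bayart2017multipliers}: the inclusion $B_{X_\infty}\subset\dom H_\infty(B_{\ell_\infty})$, where $\|z\|_{X_\infty}=\sup_{k\ge2}(\log k)^{-1/2}(\sum_{j\le k}|z_j^*|^2)^{1/2}$, yields $\chi_M(\ell_\infty^n,(X_\infty)_n)\le C^m$ uniformly in $m$; combining with $\|\mathrm{id}:\ell_q^n\to(X_\infty)_n\|\le Cn^{1/2-1/q}/\sqrt{\log n}$ then gives $\chi_m^{1/m}\le Cn^{1/2-1/q}/\sqrt{\log n}$ for \emph{all} $m$. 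For the sub-case $q\le 2$ in region~(II) the paper further factors $z=y\cdot w$ with $y_i=z_i^{p/(p+2)}$, $w_i=z_i^{2/(p+2)}$ and applies this $X_\infty$ estimate to $y$; this trick has no BH analogue.

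\textbf{The $q=1$ argument.} The claim that $w\mapsto\sum_\alpha|a_\alpha|w^\alpha$ attains its maximum on the $\ell_1$-simplex at a vertex is false: take $P(z)=z_1z_2$, whose absolute-coefficient polynomial is maximised at the centre $(1/2,1/2)$, not at a vertex. The paper instead invokes Aizenberg's theorem $K(B_{\ell_1^n})\sim 1$ directly and uses $B_{\ell_1^n}\subset B_{\ell_p^n}$.

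Your Case~(I) sketch and your Case~(III) plan (adapt \cite{bayart2016monomial} to the mixed setting) are sound; the paper does essentially this, splitting Case~(III) into $p\le q$ (easy, via the diagonal result and ball inclusion) and $q\le p$ (the hard combinatorial part).
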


	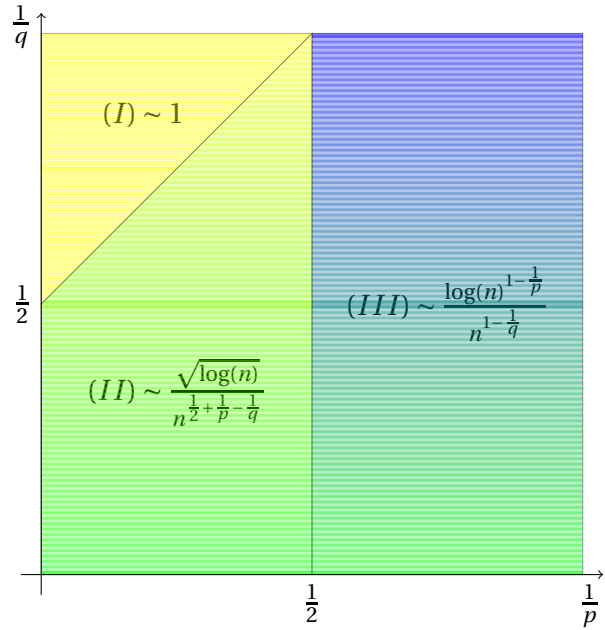
\begin{figure} \label{Figura radio}
		\begin{center}
			\begin{tikzpicture}[scale=0.9]
			
			\draw (1.5,6.8) node {$(I) \sim 1$};
			\path[draw, shade, top color=yellow, bottom color=yellow, opacity=.3]
			(4,8) node[below] {$ $}  -- (0, 8) -- (0,4) -- cycle;
			
			\draw (2,2.7) node {$(II)\sim \frac{\sqrt{\log(n)}} {n^{\frac{1}{2} + \frac{1}{p} - \frac{1}{q}}}$};
			\path[draw, shade, top color=yellow, bottom color=green, opacity=.3]
			(0,0) node[below] {$ $}  -- (0, 4) -- (4,8) -- (4,0) -- cycle;

			\draw (6,4) node {$(III)\sim \frac{\log(n)^{1 - \frac{1}{p}}}{n^{1-\frac{1}{q}}}$};
			\path[draw, shade, top color=blue, bottom color=green, opacity=.3]
			(4,8) node[below] {$ $} -- (8,8) -- (8,0) node[below] {$ $}
			-- (4, 0) -- cycle;
			
%

			
			\draw (4,0) node[below] {$\frac12$};
			\draw (0,4) node[left] {$\frac12$};

			\draw (8.1, 0) node[below] {$\frac1{p} $};
			\draw[->] (-0.3,0) -- (8.3, 0);
			\draw (0,8.1) node[left] {$\frac1{q} $};
			\draw[->] (0,-0.3) -- (0, 8.3);
			\end{tikzpicture}
		\end{center}

		\caption{Graphical overview of the mixed Bohr radius described in Theorem~\ref{maintheorem}.}
		
	\end{figure}
	
	As for $K(B_{\ell_p^n})$, the upper bound are obtained using random polynomials with adequate coefficients and relatively small norm \cite{boas2000majorant,defant2004maximum,bayart2012maximum}.
	To obtain the lower bounds the proof is divided in several cases. 
	For $p<2$ we have combined an appropriate  way to divide and distinguish certain subsets of monomials together with the upper estimates for the unconditional basis constants of spaces of polynomials on $\ell_p$ spanned by finite sets of monomials given in \cite{bayart2016monomial}.
	The interplay between  monomial convergence and mixed unconditionality for spaces of homogeneous polynomials presented in \cite[Theorem 5.1.]{defant2009domains} (which, of course, gives information on the Bohr radius) is crucial for the case $p>2$. We have strongly used some recent inclusion for the set of monomial convergence $\dom H_{\infty}(B_{\ell_p})$ $p \geq 2$ given in \cite{defant2009domains,bayart2017multipliers}.
	Therefore, it is worth noting that the techniques and results developed in the last years were fundamental for our proof.
	
	The article is organized as follows. In Section~\ref{Preliminaries} we present some basic background and results that we will use to prove Theorem~\ref{maintheorem}. We also give in this section some of the notation and concepts that appeared in this introduction. 
	Moreover, we include a heuristic argument as to why one should find, when studying the asymptotic behavior of the mixed Bohr radius, three differentiated regions in Theorem \ref{maintheorem} (see Figure \ref{Figura radio}). In Sections \ref{upper} and \ref{lower} we show the upper and lower estimates for the theorem respectively.
	In Sections \ref{upper} and \ref{lower} we show the upper and lower estimates for the theorem respectively.
	
	\section{Preliminaries}\label{Preliminaries}
	We write by $\DD$ the closed unit disk in the complex plane $\CC$.
	As usual we denote $\ell_{p}^{n}$ for the Banach space of all $n$-tuples $z=(z_1, \dots, z_n) \in \mathbb{C}^{n}$ endowed with the norm $\Vert (z_{1} , \ldots , z_{n}) \Vert_{p} = \Big( \sum_{i=1}^{n} \vert z_{i} \vert^{p} \Big)^{1/p}$ if $1 \leq p < \infty$, and $\Vert (z_{1} , \ldots , z_{n}) \Vert_{\infty} = \max_{i=1 , \ldots, n} \vert z_{i} \vert$ for $p = \infty$.  The unit ball of $\ell_{p}^{n}$ is denoted by $B_{\ell_p^n}$. For $1 \leq p \leq \infty$ we write $p'$ for its conjugate exponent (i.e., $\frac{1}{p}+\frac{1}{p'}=1$).

For every $x,y \in \CC^\NN$ we denote $|x|=(|x_1|, \ldots,|x_n|, \ldots )$, and $|x| \le |y|$ will mean that $|x_i| \le |y_i| $ for every $i \in \NN$. Recall that a Banach sequence space is a Banach space $(X, \| \cdot \|_X)$  with  $ \ell_1 \subset X \subset \ell_\infty$; and such that whenever $y \in X$, $x \in \CC^{\NN}$ and $|x| \le |y|$ it follows $x \in X$ and $\| x \|_X \le \| y \|_X$. A non-empty open set $\Rr \subset X$  is called a Reinhardt domain whenever given $ x \in \CC^\NN$ and $y \in \Rr$ such that $ |x| \le |y| $ then it holds $x \in \Rr $.

Given a Banach sequence space $X$ and fixed $n \in \NN$ its $n$-th projection $ X_n $ is defined as the quotient space induced by the mapping
\begin{align*}
\pi_n : X &\rightarrow \CC^n \\
		x & \mapsto (x_1, \ldots, x_n).
\end{align*}

	An $m$-homogeneous polynomial in $n$ variables is a function $P : \mathbb{C}^{n} \to \mathbb{C}$ of the form
	\[
	P(z_{1} , \ldots , z_{n}) = \sum_{\alpha \in \Lambda(m,n)}
	a_{\alpha} z^{\alpha},
	\]
	where $\Lambda(m,n):= \{ \alpha \in \mathbb{N}_0^n : |\alpha|:= \alpha_{1} + \cdots + \alpha_{n} = m \}$, $z^{\alpha}: = z_{1}^{\alpha_{1}} \cdots z_{n}^{\alpha_{n}}$ and $a_{\alpha} \in \mathbb{C}$. We will use the notation $a_\alpha =: a_\alpha(P)$.

	Another way of writing a polynomial $P$ is as follows:
	\[
	P(z_{1} , \ldots , z_{n}) = \sum_{\substack{\mathbf j \in \mathcal J(m,n)}}
	c_{\mathbf j} z_{\mathbf j},
	\]
	where  $\mathcal J(m,n):=\{ \mathbf j=(j_{1},\ldots , j_{k}) : 1 \leq j_{1} \leq \ldots \leq j_{k} \leq n \}$, $z_{\mathbf j}:=z_{j_{1}} \cdots z_{j_{k}}$ and $c_{\mathbf j} \in \mathbb{C}$. Note that $c_{\mathbf j}= a_{\alpha}$ with $\mathbf j=(1, \stackrel{\alpha_{1}}{\ldots}, 1, \ldots , n, \stackrel{\alpha_{n}}{\ldots} ,n)$. For some fixed $\jj \in \Jj(m,n)$ and some $ \ii = (i_1, \ldots, i_m) \in \NN^m$ we say $\ii \in [\jj] $ if the exists some permutation $\sigma \in S_m$ such that $(i_{\sigma(1)}, \ldots, i_{\sigma(m)}) = \jj$ and $|\jj|$ will denote the number of elements in $[\jj]$. Observe that $|\jj|=\frac{m!}{\alpha!}$ if $\mathbf j=(1, \stackrel{\alpha_{1}}{\ldots}, 1, \ldots , n, \stackrel{\alpha_{n}}{\ldots} ,n)$.
	
	The elements $(z^{\alpha})_{\alpha \in \Lambda(m,n)}$ (equivalently, $(z_{\mathbf j})_{\mathbf j \in \mathcal J(m,n)}$) are commonly refereed as \emph{the monomials}.
	
	Given a subset $\mathcal J \subset \mathcal J(m,n)$, we call
		$$\mathcal J^* = \{ \mathbf j \in \mathcal J(m-1,n) : \text{ there is } k\geq 1, (\mathbf j,k) \in \mathcal J\}.$$
	
	For $1 \leq p
	\leq \infty$ we denote by $\mathcal{P} (^{m}\ell_{p}^{n} )$ the Banach space of all $m$-homogeneous
	polynomials in $n$ complex variables equipped with the uniform (or sup) norm
	\[
	\Vert P \Vert_{\mathcal{P} (^{m}\ell_{p}^{n} )} := \sup_{z \in B_{\ell_p^n}} \big|P(z) \big|.
	\]

	Given two Banach sequence spaces $X$ and $Y$, for $ n,m \in \mathbb{N}$ let $ \chi_M(\Pp(^m {X_n}),\Pp(^m {Y_n}))$ be the best constant $\lambda > 0$ such that $$ \sup_{z \in {B_{Y_n}}}\left| \displaystyle\sum_{ \alpha \in \Lambda(m,n)} \theta_\alpha a_\alpha z^\alpha \right| \leq \lambda \sup_{z \in {B_{X_n}}}\left| \displaystyle\sum_{ \alpha \in \Lambda(m,n)} a_\alpha z^\alpha \right|,  $$ for every $ ( a_\alpha )_{\alpha \in \Lambda(m,n) } \subset \CC $ and every choice of complex numbers $(\theta_\alpha)_{\alpha \in \Lambda(m,n)}$ of modulus one.

	When $X=\ell_p$ and $Y=\ell_q$   we will denote $\chi_M(\Pp(^m {X_n}),\Pp(^m {Y_n}))$, the $(p,q)$-mixed unconditionally constant for the monomial basis of $\PP(^m \CC^n)$, as $\chi_M(\Pp(^m {\ell_p^n}),\Pp(^m {\ell_q^n}))$. It should be mentioned that, for any fixed $m \in \mathbb{N}$, the asymptotic growth of $\chi_M(\Pp(^m {\ell_p^n}),\Pp(^m {\ell_q^n}))$ as $n\to\infty$ was studied in \cite{galicer2016sup}.
	
	Every entire function $f: \CC^n \rightarrow \CC$ can be written as
	\[
	f = \dis\sum_{m \ge 0} \dis\sum_{\alpha \in \Lambda(m,n)} a_\alpha(f) z^\alpha.
	\]
	Recall that $K(B_{\ell_p^n},B_{\ell_q^n}) $ stands for the $n$-dimensional $(p,q)$-Bohr radius. That is, $K(B_{\ell_p^n},B_{\ell_q^n}) $ denotes the greatest constant $r>0$ such that for every entire function $f=\sum_{\alpha} a_{\alpha} z^{\alpha}$ in $n$-complex variables, we have the following (mixed) Bohr-type  inequality
	$$\sup_{z \in  r \cdot B_{\ell_q^n}} \sum_{\alpha} \vert a_{\alpha} z^{\alpha} \vert \leq \sup_{z \in  B_{\ell_p^n}} \vert f(z) \vert.$$
	
	In the same way, the $m$-homogeneous mixed Bohr radius, $K_m(B_{\ell_p^n},B_{\ell_q^n})$, is defined as the greatest $r>0$ such that for every $P \in \PP(^m \CC^n)$ it follows
	\[
	\dis\sup_{z \in B_{\ell_q^n}} \dis\sum_{\alpha \in \Lambda(m,n)} |a_{\alpha} z^\alpha| r^m = \dis\sup_{z \in r \cdot B_{\ell_q^n}} \dis\sum_{\alpha \in \Lambda(m,n)} |a_{\alpha} z^\alpha| \le \| P \|_{\PP(^m \ell_p^n)}
	\]
	
	It is plain that $K(B_{\ell_p^n},B_{\ell_q^n})  \le K_m(B_{\ell_p^n},B_{\ell_q^n}) $.
	
	\begin{remark}\label{hom Boh vs unc}
		 $$ K_m(B_{\ell_p^n},B_{\ell_q^n}) = \frac{1}{\chi_M(\Pp(^m {\ell_p^n}),\Pp(^m {\ell_q^n}))^{1/m}}.$$
	\end{remark}
	
	 \begin{proof}
	 Given $P \in \PP (^m \CC^n)$ and for any $( \theta_\alpha )_{\alpha \in \Lambda(m,n)}$ we have
		\begin{align*}
		\| \dis\sum_{\alpha \in \Lambda(m,n)} \theta_\alpha a_\alpha(P) z^\alpha \|_{\PP(^m \ell_q^n)}
		& \le \| \dis\sum_{\alpha \in \Lambda(m,n)} |a_\alpha(P) z^\alpha| \|_{\PP(^m \ell_q^n)} \\
		& = \| \dis\sum_{\alpha \in \Lambda(m,n)} |a_\alpha(P) z^\alpha| (K_m(B_{\ell_p^n},B_{\ell_q^n}))^m \|_{\PP(^m \ell_q^n)}  \frac{1}{(K_m(B_{\ell_p^n},B_{\ell_q^n}))^m} \\
		& \le \frac{1}{(K_m(B_{\ell_p^n},B_{\ell_q^n}))^m} \| P \|_{\PP(^m \ell_p^n)},
		\end{align*}
		which leads to the inequality $ \chi_M(\Pp(^m {\ell_p^n}),\Pp(^m {\ell_q^n}))^{1/m} \le \frac{1}{K_m(B_{\ell_p^n},B_{\ell_q^n})}$. On the other hand, for $P \in \PP (^m \CC^n)$ take $\theta_\alpha = \frac{\overline{a_\alpha(P)}}{|a_\alpha(P)|}$. Then we have
		\begin{align*}
		\| \dis\sum_{\alpha \in \Lambda(m,n)} |a_\alpha(P) z^\alpha| \|_{\PP(^m \ell_q^n)}
		& = \| \dis\sum_{\alpha \in \Lambda(m,n)} \theta_\alpha a_\alpha(P) z^\alpha \|_{\PP(^m \ell_q^n)}  \\
		& \le \chi_M(\Pp(^m {\ell_p^n}),\Pp(^m {\ell_q^n})) \| P \|_{\PP(^m \ell_p^n)},
		\end{align*}
		or equivalently,
		\[
		\dis\sup_{z \in B_{\ell_q^n}} \dis\sum_{\alpha \in \Lambda(m,n)} |a_{\alpha} z^\alpha| \left( \frac{1}{\chi_M(\Pp(^m {\ell_p^n}),\Pp(^m {\ell_q^n}))^{1/m}}\right)^m \le \| P \|_{\PP(^m \CC^n)},
		\]
		which means $ \frac{1}{ \chi_M(\Pp(^m {\ell_p^n}),\Pp(^m {\ell_q^n}))^{1/m} } \le K_m(B_{\ell_p^n},B_{\ell_q^n}) $.
	\end{proof}
	It will be useful to remember a classic result due to F. Wiener (see \cite{khavinson1997bohr}) which asserts that for every holomorphic function $f$ written as the sum of $m$-homogeneous polynomials as $f = \dis\sum_{m \ge 1} P_m + a_0$ and such that $\sup_{z \in B_{\ell_p^n }} |f(z)| \le 1$ it holds
		\begin{equation}\label{Winer}
	\| P_m \|_{\PP(^m \ell_p^n)} \le 1 - |a_0|^2,
	\end{equation}
		for every $m \in \zN$.
	
	In general this inequality is presented for the uniform norm on the polydisk $\Vert \cdot \Vert_{\PP(^m \ell_{\infty}^n)}$ (i.e., $p=\infty$), but this version easily follows by a standard modification of the original argument (given $z \in B_{\ell_p^n}$ consider the auxiliary function $g : \CC^n \to \CC$ given by $g(w):=f(w\cdot z)$).
	
	The next lemma is an adaption of the case $p=q$, see \cite[Theorem 2.2.]{defant2003bohr} and constitutes the basic link between Bohr radius and unconditional
basis constants of spaces of polynomials on the mixed context ($p$ not necessarily equal to $q$).
	\begin{lemma}\label{Bohr vs unc}
		For every $n \in \zN$ and $1 \le p , q \le \infty$ it holds
		\[
		\frac{1}{3} \frac{1}{ \sup_{ m \ge 1}  \chi_M(\Pp(^m {\ell_p^n}),\Pp(^m {\ell_q^n}))^{1/m}}  \le K(B_{\ell_p^n},B_{\ell_q^n})  \le \min \left\lbrace \frac{1}{3}, \frac{1}{\sup_{ m \ge 1}  \chi_M(\Pp(^m {\ell_p^n}),\Pp(^m {\ell_q^n}))^{1/m}} \right\rbrace.
		\]
	\end{lemma}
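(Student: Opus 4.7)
The plan is to combine the classical three-ingredient recipe of Defant--García--Maestre (in its mixed-norm version): Wiener's inequality (eq.\ (\ref{Winer})), the identification of $K_m$ with the reciprocal of the unconditional constant (Remark~\ref{hom Boh vs unc}), and Bohr's one-variable theorem $K(\DD) = \tfrac{1}{3}$.

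For the upper bound, I would split it into the two estimates in the minimum. First, $K(B_{\ell_p^n},B_{\ell_q^n}) \le \tfrac{1}{3}$: plug in a one-variable extremal function $f(z) = \phi(z_1)$ with $\phi$ holomorphic on $\DD$. Since $\{z_1 : z \in B_{\ell_r^n}\} = \DD$ for every $1\le r \le \infty$, the $n$-variable inequality for $f$ reduces to Bohr's classical one-dimensional one, so $r \le 1/3$. Second, since $K \le K_m$ for each $m \ge 1$ (just apply the defining inequality to $m$-homogeneous polynomials), Remark~\ref{hom Boh vs unc} yields $K \le \chi_M(\Pp(^m\ell_p^n),\Pp(^m\ell_q^n))^{-1/m}$, and taking the infimum over $m$ gives the second upper bound.

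For the lower bound set $C := \sup_{m \ge 1} \chi_M(\Pp(^m\ell_p^n),\Pp(^m\ell_q^n))^{1/m}$ and $r := \tfrac{1}{3C}$. Take $f(z) = \sum_{m \ge 0} P_m(z)$ with $\|f\|_{\PP(B_{\ell_p^n})} \le 1$ and constant term $a_0$. The key intermediate step is: for each $m$-homogeneous $P_m$ with coefficients $a_\alpha$, one has
\[
\sup_{w \in B_{\ell_q^n}} \sum_{|\alpha|=m} |a_\alpha w^\alpha| \; = \; \Bigl\| \sum_{|\alpha|=m} |a_\alpha|\, w^\alpha \Bigr\|_{\PP(^m\ell_q^n)} \; \le \; \chi_M(\Pp(^m\ell_p^n),\Pp(^m\ell_q^n))\, \|P_m\|_{\PP(^m\ell_p^n)},
\]
where the first equality follows because the sum of absolute monomial values is attained on the positive orthant (after passing to $|w|$), and the inequality is direct from the definition of $\chi_M$ by choosing unimodular signs $\theta_\alpha = \overline{a_\alpha}/|a_\alpha|$. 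Wiener's inequality (\ref{Winer}) bounds $\|P_m\|_{\PP(^m\ell_p^n)} \le 1 - |a_0|^2$ for $m \ge 1$.

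Putting it together, for $z \in r\cdot B_{\ell_q^n}$ and using $r^m \chi_M(\Pp(^m\ell_p^n),\Pp(^m\ell_q^n)) \le 3^{-m}$ by the choice of $r$, I would estimate
\[
\sum_{\alpha} |a_\alpha z^\alpha| \; \le \; |a_0| + (1-|a_0|^2) \sum_{m \ge 1} r^m \chi_M(\Pp(^m\ell_p^n),\Pp(^m\ell_q^n)) \; \le \; |a_0| + \tfrac{1}{2}(1-|a_0|^2),
\]
which is $\le 1$ since $(|a_0|-1)^2 \ge 0$. This yields $r \le K(B_{\ell_p^n},B_{\ell_q^n})$, i.e.\ the lower bound. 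The only subtle point, which I would write out carefully, is the reduction to nonnegative coefficients that transforms the Bohr-type quantity $\sum_\alpha |a_\alpha z^\alpha|$ into a genuine polynomial sup-norm so that the unconditional constant $\chi_M$ can be applied; everything else is a geometric-series computation and Wiener's inequality.
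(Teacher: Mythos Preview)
Your proposal is correct and follows essentially the same argument as the paper's proof: both use Remark~\ref{hom Boh vs unc} together with $K\le K_m$ and the one-variable bound $K(\DD)=1/3$ for the upper estimate, and for the lower estimate both set $r=1/(3\sup_m \chi_M^{1/m})$, bound each homogeneous layer via the unconditional constant, apply Wiener's inequality~\eqref{Winer}, and sum the resulting geometric series. The only cosmetic difference is that you make the one-variable reduction $f(z)=\phi(z_1)$ explicit, whereas the paper simply asserts $K(B_{\ell_p^n},B_{\ell_q^n})\le K(\DD)$.
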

	
	\begin{proof}
		Form Remark \ref{hom Boh vs unc} we have $K_{p,q(n)} \le \inf_{ m \ge 1}\frac{1}{ \chi_M(\Pp(^m {\ell_p^n}),\Pp(^m {\ell_q^n}))^{1/m}}=  \frac{1}{\sup_{ m \ge 1}  \chi_M(\Pp(^m {\ell_p^n}),\Pp(^m {\ell_q^n}))^{1/m}}$ and due to Bohr's inequality we know $K(\DD) = \frac{1}{3}$ as it is clear that $ K(B_{\ell_p^n},B_{\ell_q^n})  \le K(\DD)$ for every $n \in \zN$ the right hand side inequality holds.
		For the left hand side inequality let us take some holomorphic function $f$, without loss of generality let us assume $ \sup_{z \in B_{\ell_p^n}}|f(z)| \le 1$, and consider its decomposition as a sum of $m$-homogeneous polynomials $f = \dis\sum_{m \ge 0} P_m$. For every $m \in \zN_0$ it holds $ P_m(z) = \dis\sum_{\alpha \in \Lambda(m,n)} a_\alpha(f) z^\alpha$, thus taking $\rho = \sup_{m \ge 1 } \chi_M(\Pp(^m {\ell_p^n}),\Pp(^m {\ell_q^n}))^{1/m} $ and using Remark \ref{hom Boh vs unc} again it follows
		\begin{equation*}\label{desig hom}
		\Big\| \dis\sum_{\alpha \in \Lambda(m,n)} |a_\alpha(f)| \left(\frac{z}{\rho}\right)^\alpha \Big\|_{\PP(^m \ell_q^n)} \le  \Big\| \dis\sum_{\alpha \in \Lambda(m,n)} a_\alpha(f) z^\alpha \Big\|_{\PP(^m \ell_p^n)}.
		\end{equation*}
		
		Applying the above mentioned Wiener's result for some $w \in B_{\ell_q^n}$ we have that
		\begin{align*}
		\dis\sum_{m \ge 0} \dis\sum_{\alpha \in \Lambda(m,n)} |a_\alpha(f)| \left( \frac{w}{3 \rho} \right)^\alpha
		& \le |a_0(f)| + \dis\sum_{m \ge 1} \frac{1}{3^m} \Big\| \dis\sum_{\alpha \in \Lambda(m,n)} a_\alpha(f) z^\alpha \Big\|_{\PP(^m \ell_p^n)} \\
		& \le |a_0(f)| + \dis\sum_{m \ge 1} \frac{1}{3^m}(1 - |a_0(f)|^2) \\
		& \le |a_0(f)| + \frac{1 - |a_0(f)|^2}{2} \le 1 ,
		\end{align*}
		where last inequality holds as $ |a_0(f)| \le \sup_{z \in B_{\ell_p^n}} |f(z)| \le 1$. The last chain of inequalities and the maximality of mixed Bohr radius lead us to $\frac{1}{3 \rho} \le K(B_{\ell_p^n},B_{\ell_q^n}) $ as we wanted to prove.
	\end{proof}

The previous lemma shows that understanding $K(B_{\ell_p^n},B_{\ell_q^n})$ translates into seeing how the constant $\chi_M(\Pp(^m {\ell_p^n}),\Pp(^m {\ell_q^n}))^{1/m}$ behaves. 
It should be mentioned that, for any fixed $m \in \mathbb{N}$, the asymptotic growth of $\chi_M(\Pp(^m {\ell_p^n}),\Pp(^m {\ell_q^n}))$ as $n\to\infty$ was studied in \cite{galicer2016sup}.	
These results unfortunately are not useful because, as can be seen in Lemma \ref{Bohr vs unc}, one needs to comprehend how $\chi_M(\Pp(^m {\ell_p^n}),\Pp(^m {\ell_q^n}))^{1/m}$ grows by moving both the number of variables, $n$, and the degree of homogeneity, $m$. But beyond this, they give a guideline of what to expect (at least what the different regions in Figure \ref{Figura radio} should look like).
Indeed, in \cite{galicer2016sup} we have proved the following

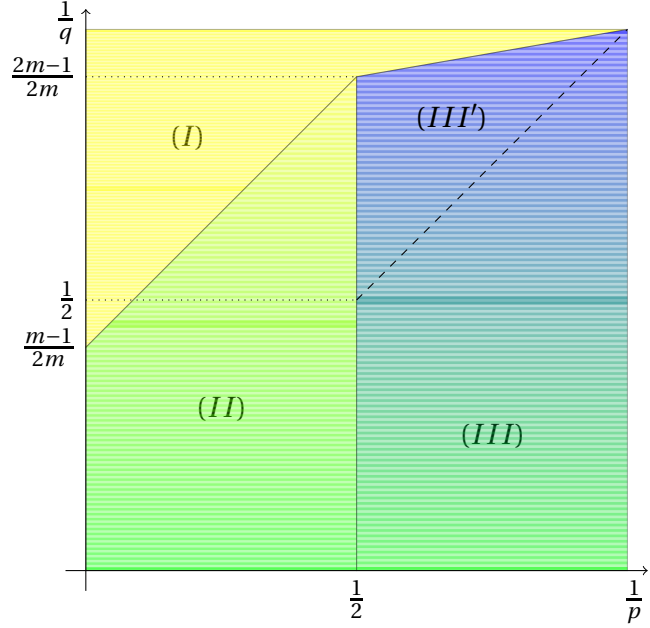
\begin{figure}  \label{Figura incondicionalidad}
\begin{center}
\begin{tikzpicture}[scale=0.9]

	\draw (1.5,6.4) node {$(I)$};
  \path[draw, shade, top color=yellow, bottom color=yellow, opacity=.3]
     (4,7.3) node[below] {$ $}  -- (8, 8) -- (0, 8) -- (0,3.3) -- cycle;

    \draw (2,2.4) node {$(II)$};
  \path[draw, shade, top color=yellow, bottom color=green, opacity=.3]
     (0,0) node[below] {$ $}  -- (0, 3.3) -- (4,7.3) -- (4,0) -- cycle;

	\draw (6,2) node {$(III)$};
  \path[draw, shade, top color=blue, bottom color=green, opacity=.3]
    (4,7.3) node[below] {$ $} -- (8,8) -- (8,0) node[below] {$ $}
     -- (4, 0) -- cycle;

\draw[dashed] (4,4) -- (8,8);
	\draw (5.4,6.7) node {$(III')$};

\draw[dotted] (0,4) -- (4,4); 
\draw[dotted] (0,7.3) -- (4,7.3); 

\draw (4,0) node[below] {$\frac12$};
\draw (0,4) node[left] {$\frac12$};
\draw (0,7.3) node[left] {$\frac{2m-1}{2m}$};
\draw (0,3.3) node[left] {$\frac{m-1}{2m}$};

\draw (8.1, 0) node[below] {$\frac1{p} $};
  \draw[->] (-0.3,0) -- (8.3, 0);
\draw (0,8.1) node[left] {$\frac1{q} $};
  \draw[->] (0,-0.3) -- (0, 8.3);
 \end{tikzpicture}
\end{center}

\caption{Graphical overview of the mixed unconditional constant described in Theorem~\ref{cte incond mon}.}

\end{figure}
\begin{theorem}\label{cte incond mon}
\[
\begin{cases}  \; \chi_{p,q}(\mathcal{P}(^m \mathbb{C}^n)) \sim 1 & \text{ for } (I): \; [\frac{1}{p} + \frac{m-1}{2m} \leq \frac{1}{q} \wedge \frac{1}{p} \leq \frac{1}{2} ] \text{ or } [ \frac{m-1}{m} + \frac{1}{mp} < \frac{1}{q} \wedge \frac{1}{2} \leq \frac{1}{p} ], \\
\;\chi_{p,q}(\mathcal{P}(^m \mathbb{C}^n)) \sim n^{m (\frac{1}{2}+\frac{1}{p}-\frac{1}{q}) - \frac{1}{2}} & \text{ for } (II) \; \; [ \frac{1}{p} + \frac{m-1}{2m} \geq \frac{1}{q} \wedge \frac{1}{p} \leq \frac{1}{2}  ] ,\\
\; \chi_{p,q}(\mathcal{P}(^m \mathbb{C}^n)) \sim n^{(m-1)(1-\frac{1}{q}) + \frac{1}{p}  -\frac{1}{q}} & \text{ for } (III) \; : \; [ \frac{1}{p} \geq \frac{1}{q} \; \wedge \; \frac{1}{2} \leq \frac{1}{p}  ], \\
\; \chi_{p,q}(\mathcal{P}(^m \mathbb{C}^n)) \sim_{\varepsilon}  n^{(m-1)(1-\frac{1}{q}) + \frac{1}{p}  -\frac{1}{q}} & \text{ for } (III') \; : \; [ 1 - \frac{1}{m} + \frac{1}{mp} \geq \frac{1}{q}\ge\frac1{p} \; \wedge \; \frac{1}{2} < \frac{1}{p} <1 ]. \\
\end{cases}
\]
where $\chi_{p,q}(\mathcal{P}(^m \mathbb{C}^n)) \sim_{\varepsilon} n^{(m-1)(1-\frac{1}{q}) + \frac{1}{p}  -\frac{1}{q}}$ means that
$$  n^{(m-1)(1-\frac{1}{q}) + \frac{1}{p}  -\frac{1}{q}} \ll \chi_{p,q}(\mathcal{P}(^m \mathbb{C}^n)) \ll n^{(m-1)(1-\frac{1}{q}) + \frac{1}{p}  -\frac{1}{q} + \varepsilon},$$
for every $\varepsilon >0$.

\end{theorem}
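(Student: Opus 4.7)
The plan is to split the analysis into upper bounds on $\chi_{p,q}(\mathcal{P}(^m \mathbb{C}^n))$ valid for every polynomial, and matching lower bounds exhibited by explicit extremal polynomials. A useful first reduction is that, since taking the sup of $|\sum \theta_\alpha a_\alpha z^\alpha|$ over unimodular $\theta_\alpha$ coincides with $\sum |a_\alpha z^\alpha|$, and the balls $B_{\ell_p^n}, B_{\ell_q^n}$ are invariant under coordinatewise modulus, one has
\[
\chi_{p,q}(\mathcal{P}(^m \mathbb{C}^n)) \;=\; \sup_{P \neq 0}\, \frac{\bigl\| \sum_{\alpha} |a_\alpha(P)|\, z^\alpha \bigr\|_{\mathcal{P}(^m \ell_q^n)}}{\|P\|_{\mathcal{P}(^m \ell_p^n)}},
\]
which reframes the problem as a comparison of two symmetric norms on $m$-homogeneous polynomials with non-negative coefficients.

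For the \emph{upper bounds} I would invoke Hardy--Littlewood type inequalities for $m$-homogeneous polynomials on $\ell_p^n$, which give $\bigl(\sum_\alpha |a_\alpha(P)|^{s}\bigr)^{1/s} \le C^m \|P\|_{\mathcal{P}(^m \ell_p^n)}$ for an optimal $s=s(m,p)$ (the Bohnenblust--Hille exponent $\tfrac{2m}{m+1}$ when $p=\infty$, and the $p$-dependent analogue $\tfrac{1}{s} = \tfrac{1}{2} + \tfrac{1}{2m} - \tfrac{1}{p}$ in the range $2 \le p < \infty$, with a parallel statement for $p<2$). Combining such a coefficient inequality with H\"older in the monomial expansion controls the numerator by $\bigl(\sum_\alpha |a_\alpha|^s\bigr)^{1/s}\bigl(\sup_{z \in B_{\ell_q^n}} \sum_\alpha z^{s'\alpha}\bigr)^{1/s'}$, where the latter monomial sum is bounded either by multinomial combinatorics on the diagonal or by a further application of H\"older together with the inclusion $\ell_q^n \hookrightarrow \ell_r^n$. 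Balancing the exponents in the three regions yields the orders $1$, $n^{m(1/2 + 1/p - 1/q) - 1/2}$ and $n^{(m-1)(1-1/q) + 1/p - 1/q}$ of the statement.

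For the \emph{lower bounds} I would use the Kahane--Salem--Zygmund random polynomial construction, in its version for Reinhardt domains due to Boas and its sharpenings by Defant--Maestre and Bayart--Pellegrino--Seoane, which produces signs $\varepsilon_\alpha \in \{\pm1\}$ such that $P_\varepsilon(z) = \sum_{|\alpha|=m} \varepsilon_\alpha z^\alpha$ satisfies an upper bound on $\|P_\varepsilon\|_{\mathcal{P}(^m \ell_p^n)}$ involving only $\sqrt{\log n}$ and an explicit power of $n$ that depends on $p$. Evaluating the symmetrized polynomial $\sum_\alpha |a_\alpha(P_\varepsilon)|\, z^\alpha = \sum_{|\alpha|=m} z^\alpha$ at a well-chosen point of $B_{\ell_q^n}$ (typically the diagonal $n^{-1/q}(1,\dots,1)$, or in the small-$p$ regime a vector supported on an optimally chosen subset of coordinates) gives a lower estimate of the ratio that matches the upper one in regions (I), (II), and (III).

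The main obstacle is Region (III'), where the upper and lower bounds are known only to match up to a factor of $n^{\varepsilon}$. Neither the relevant Hardy--Littlewood coefficient inequality for $1 < p < 2$ near the boundary $\tfrac{1}{q} = \tfrac{1}{p}$, nor the KSZ-type construction, is known to be sharp up to absolute constants in this sub-region; closing the gap would require an improvement of one of these foundational tools. The technical work therefore consists in carefully partitioning $\mathcal J(m,n)$ into subsets with controlled multiplicities (in the spirit of the $\mathcal J^*$ formalism introduced above) so that the coefficient inequalities can be applied on each piece at optimal exponent and the random construction can be matched up to the $n^{\varepsilon}$ slack inherent in Region (III').
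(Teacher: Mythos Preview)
The paper does not actually prove this theorem: it is quoted from the authors' earlier work \cite{galicer2016sup} and is included here only to motivate heuristically the three regions in the main Theorem~\ref{maintheorem} (see the discussion surrounding Figure~2). There is therefore no proof in the present paper to compare your proposal against.

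That said, your outline matches the strategy carried out in \cite{galicer2016sup}: upper bounds come from polynomial Bohnenblust--Hille / Hardy--Littlewood type coefficient inequalities combined with H\"older, and lower bounds from Kahane--Salem--Zygmund random sign polynomials evaluated at the diagonal point $n^{-1/q}(1,\dots,1)$ of $B_{\ell_q^n}$. Your remark that region $(III')$ carries an $n^{\varepsilon}$ slack that current tools do not remove is exactly the situation recorded in the statement. So the sketch is correct in spirit, though rather thin: the precise Hardy--Littlewood exponents for $1\le p\le 2$, and the case splits needed to get the exact power of $n$ in regions $(II)$ and $(III)$, require more careful bookkeeping than your proposal indicates.
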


The heuristic to interpret the different regions in Theorem \ref{maintheorem} is the following:
If one thinks that the correct order of the region $(III')$ coincides with that of $(III)$ in Theorem \ref{cte incond mon} (in fact, this is what we believe) and assumes that the homogeneity degree is very very large ($ m \to \infty $) then the graph in Figure \ref{Figura incondicionalidad} transforms into the one presented in Figure \ref{Figura radio}. All this, together with the upper bounds that one gets after using classical random polynomials (see Section \ref{upper}, somehow the easy part) helped us to define where to aim to prove lower bounds.
We highlight that, the logarithmic factors that appear in Theorem \ref{maintheorem}, are missing in Theorem \ref{cte incond mon}. This is, somehow, not coincidental and their presence is due to the interplay between the number of variables and the degree of homogeneity in Lemma \ref{Bohr vs unc}.

We continue with some definitions that will be useful later.
Given $X$ a Banach sequence space we denote $H_{\infty}(B_X)$ to the space of holomorphic functions over $B_X$ endowed with the norm given by $\| f \|_{H_{\infty}(B_X)} : = \dis\sup_{z \in B_X}|f(z)|$. The domain of monomial convergence of $H_{\infty}(B_X)$ is defined as
\[
\dom (H_{\infty}(B_X)) := \Big\lbrace z \in \ell_\infty : \dis\sum_{m \ge 0} \dis\sum_{\alpha \in \Lambda(m,n)} |a_\alpha(f) z^\alpha| < \infty \mbox{ for every } f \in H_{\infty}(B_X) \Big\rbrace.
\]

The next theorem appears in \cite[Theorem $5$.$1$]{defant2009domains} and relates monomial convergence with the study of the mixed unconditional constant of the monomial basis.

\begin{theorem}\label{general conv mon}
For a couple of Banach sequence spaces $X,Y$ the following are equivalent
\begin{itemize}
\item[(1)] $r B_Y \subset \dom H_{\infty}(B_X)$ for some $r > 0$.
\item[(2)] There exists a constant $ C > 0 $ independent of $m$ such that
\[
\sup_{n \ge 1} \chi_M(\Pp(^m X_n), \Pp(^m Y_n)) \le C^m.
\]
\end{itemize}
\end{theorem}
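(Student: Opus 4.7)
The plan is to handle the two directions separately: $(2)\Rightarrow(1)$ is a geometric-series argument built on Wiener's inequality, while $(1)\Rightarrow(2)$ proceeds by contrapositive, piecing together finite-dimensional witnesses into a single entire function and a single test point.

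For $(2)\Rightarrow(1)$, fix $r<1/C$ and take $f\in H_\infty(B_X)$ with $\|f\|_{H_\infty(B_X)}\le 1$. Write $f=a_0(f)+\sum_{m\ge 1}P_m$ as a sum of homogeneous polynomials; Wiener's inequality \eqref{Winer} gives $\|P_m\|_{\Pp(^m X)}\le 1$. For each $n$ the truncation $P_m^{(n)}$ of $P_m$ to its first $n$ variables still satisfies $\|P_m^{(n)}\|_{\Pp(^m X_n)}\le 1$, and for a fixed $z\in rB_Y$ the truncation $z^{(n)}$ lies in $rB_{Y_n}$. By the very definition of $\chi_M$ (choose $\theta_\alpha=\overline{a_\alpha}/|a_\alpha|$ and positive $z$, exactly as in the proof of Remark~\ref{hom Boh vs unc}),
\[
\sum_{\substack{\alpha\in\NN_0^n\\ |\alpha|=m}}|a_\alpha(P_m)(z^{(n)})^\alpha|\le \chi_M(\Pp(^m X_n),\Pp(^m Y_n))\,r^m\le (rC)^m.
\]
Letting $n\to\infty$ by monotone convergence and then summing over $m$ produces a convergent geometric series of ratio $rC<1$, so $\sum_\alpha|a_\alpha(f)z^\alpha|<\infty$ and $rB_Y\subset\dom H_\infty(B_X)$.

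For $(1)\Rightarrow(2)$ I argue by contrapositive. If the exponential control fails, then I can select a sequence $(m_k,n_k)$ and prescribe any divergence rate $C_k\to\infty$---I choose $C_k=4^k$---together with a unit-norm polynomial $P_k\in\Pp(^{m_k}X_{n_k})$ and a point $z_k\in B_{Y_{n_k}}$ with $z_k\ge 0$ satisfying $\sum_\alpha|a_\alpha(P_k)|z_k^\alpha\ge C_k^{m_k}$. I now assemble these witnesses on disjoint blocks: pick pairwise disjoint intervals $I_k\subset\NN$ of length $n_k$; identify $X_{n_k}$ and $Y_{n_k}$ with the blocks of $X$ and $Y$ supported on $I_k$ (monotonicity of the Banach-sequence norm guarantees the quotient norm coincides with the norm of the block embedding); view each $P_k$ as a polynomial on $X$ whose variables lie in $I_k$; and let $\tilde z_k\in Y$ be the block embedding of $z_k$. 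Set
\[
f:=\sum_{k\ge 1}\frac{P_k}{k^2},\qquad w:=\sum_{k\ge 1}s_k\tilde z_k\quad\text{with}\quad s_k:=\frac{r}{2^{k+1}}.
\]
Disjointness of variables together with $\|P_k\|=1$ gives $\|f\|_{H_\infty(B_X)}\le\sum k^{-2}<\infty$, so $f\in H_\infty(B_X)$. Disjoint supports of the $\tilde z_k$ and $\|\tilde z_k\|_Y\le 1$ yield $\|w\|_Y\le\sum s_k\le r$, so $w\in rB_Y$. Since the monomials of $P_k$ involve only variables in $I_k$, the monomial expansion of $f$ at $w$ splits across blocks and
\[
\sum_\alpha|a_\alpha(f)w^\alpha|=\sum_k\frac{s_k^{m_k}}{k^2}\sum_\alpha|a_\alpha(P_k)|z_k^\alpha\ge\sum_k\frac{(s_kC_k)^{m_k}}{k^2}=\sum_k\frac{(r\,2^{k-1})^{m_k}}{k^2},
\]
whose general term tends to infinity as soon as $r\,2^{k-1}>1$; this contradicts $w\in\dom H_\infty(B_X)$.

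The main obstacle is the second direction: one must reconcile three competing constraints simultaneously---boundedness of $f$ on $B_X$, membership of $w$ in $rB_Y$, and divergence of the monomial sum at $w$---in the abstract setting of a Banach sequence space whose norm need not split additively across disjoint blocks. Monotonicity of the norm is the only structural property available to glue finite-dimensional witnesses together without inflating norms, and the scheduling $C_k\gg 1/s_k$ (prescribing the divergence rate to overwhelm the geometric scalings $s_k$) is what permits the harmless weights $k^{-2}$ to coexist with blow-up of the full series.
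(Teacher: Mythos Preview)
The paper does not prove this theorem; it is quoted from \cite[Theorem~5.1]{defant2009domains}. So there is no ``paper's own proof'' to compare against, and I evaluate your argument on its merits.

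Your direction $(2)\Rightarrow(1)$ is fine. One small point: equation~\eqref{Winer} is stated only for $B_{\ell_p^n}$, while you apply it on $B_X$ for an arbitrary Banach sequence space. This is harmless---the same one-variable trick $g(w)=f(wz)$ works on any Reinhardt domain---but you should say so rather than cite~\eqref{Winer} directly.

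In the direction $(1)\Rightarrow(2)$ there is a genuine gap. Your block construction hinges on the sentence ``monotonicity of the Banach-sequence norm guarantees the quotient norm coincides with the norm of the block embedding.'' That is false in the generality of the paper's definition: the Banach sequence spaces here are only assumed to satisfy $\ell_1\subset X\subset\ell_\infty$ and the lattice property $|x|\le|y|\Rightarrow\|x\|\le\|y\|$; no permutation invariance is required. Monotonicity does give that $X_n$ (the quotient onto the \emph{first} $n$ coordinates) embeds isometrically back as $\spa\{e_1,\dots,e_n\}$, but it says nothing about a block $I_k$ sitting elsewhere. Consequently neither $\|\tilde P_k\|_{\Pp(^{m_k}X)}\le 1$ nor $\|\tilde z_k\|_Y\le 1$ follows from $\|P_k\|_{\Pp(^{m_k}X_{n_k})}=1$ and $z_k\in B_{Y_{n_k}}$, and both are needed for your estimates on $\|f\|_{H_\infty(B_X)}$ and $\|w\|_Y$.

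If $X$ and $Y$ are \emph{symmetric} (as are all the spaces actually used later in the paper: $\ell_p$, $\ell_q$, and $X_\infty$), your construction goes through verbatim and is a clean, self-contained argument. To cover the general case you would need either to add symmetry as a hypothesis, or to replace the explicit block construction by the closed-graph/Baire-category route used in \cite{defant2009domains}, which extracts a uniform quantitative bound from the qualitative inclusion $rB_Y\subset\dom H_\infty(B_X)$ without moving witnesses around in coordinate space.
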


	If $(a_{n})_{n}$ and $(b_{n})_{n}$ are two sequences of real numbers we will write $a_{n} \ll b_{n}$ if there
	exists a constant $C>0$ (independent of $n$) such that $a_{n} \leq C b_{n}$ for every $n$.
	We will write $a_{n} \sim b_{n}$ if $a_{n} \ll b_{n}$ and $b_{n} \ll a_{n}$.
	We will use repeatedly the Stirling formula which asserts
	\begin{equation}\label{Stirling}
	n! \sim \sqrt{2 \pi n} \left( \frac{n}{e} \right)^n.
	\end{equation}
	
	We use the letters $C, C_1, C_2$, etc. to denote absolute positive constants (which from one inequality to the other may vary and sometimes denoted in the same way).

\section{Upper bounds}\label{upper}
Upper bounds constitute the easy part: we will use the classical probabilistic approach. Bayart in \cite[Corollary 3.2]{bayart2012maximum} (see also \cite{boas2000majorant,defant2003bohr,defant2004maximum}) exhibited polynomials 
with small sup-norm on the unit ball of $\ell_p^n$.
He showed that for each $1 \leq p \leq \infty$ there exists an $m$-homogeneous polynomial in $n$ complex variables, $P(z):=  \sum_{\alpha \in \Lambda(m,n)} \varepsilon_{\alpha} \frac{m!}{\alpha!} z^{\alpha}$, with $\varepsilon_{\alpha} = \pm 1$ for every $\alpha$,   such that $\Vert P \Vert_{\mathcal{P} (^{m}\ell_{p}^{n} )} \leq D_p(m,n)$ where
\begin{equation} \label{polinomios de Bayart}
D_p(m,n):= C_p  \times
\begin{cases}
  (\log(m) m! )^{1- \frac{1}{p}} \;n^{1- \frac{1}{p}}& \text{ if } 1 \leq p \leq 2, \\
 (\log(m) m! )^{\frac{1}{2}} n^{m(\frac{1}{2} - \frac{1}{p}) + \frac{1}{2}} & \text{ if } 2 \leq p \leq \infty, \\
\end{cases}
\end{equation}
and  $C_p$  depends  exclusively on  $p$.

We will also need the following remark which is an easy calculus exercise.
\begin{remark}\label{opt f(m)}
For every positive numbers $a,b>0$ and $n \in \zN$, the function $f : \RR_{>0}  \rightarrow \RR$ given by $f(x) = x^a n^{\frac{b}{x}}$ attains its minimum at $x = \log(n)\frac{b}{a}$.
\end{remark}

\begin{proof}[Proof of the upper bounds of Theorem \ref{maintheorem}]
Upper bounds for the case $ \frac{1}{p} + \frac{1}{2} \le \frac{1}{q} $ in  Theorem \ref{maintheorem} are trivial.

Suppose $  \frac{1}{p} + \frac{1}{2} \ge \frac{1}{q}$ and let $(\varepsilon_\alpha)_{\alpha \in \Lambda(m,n)} \subset \lbrace -1, 1 \rbrace $ signs such that
\[
 \Big\| \dis\sum_{\alpha \in \Lambda(m,n)} \varepsilon_\alpha \frac{m!}{\alpha!} z^\alpha \Big\|_{B_{\ell_p^n}} \le D_p(m,n),
 \]
 as \eqref{polinomios de Bayart}. Taking $z_0=(\frac{1}{n^{1/q}}, \ldots, \frac{1}{n^{1/q}}) \in B_{\ell_q}$ we can conclude that
\begin{align*}
n^{m(1-\frac{1}{q})} & = \dis\sum_{ \alpha \in \Lambda(m,n)} \frac{m!}{\alpha!} \left( \frac{1}{n^\frac{1}{q}} \right)^m \\
& \le \Big\| \dis\sum_{ \alpha \in \Lambda(m,n)} | \varepsilon_\alpha | \frac{m!}{\alpha!} z^\alpha \Big\|_{B_{\ell_q^n}} \\
& \le \chi_M(\Pp(^m {\ell_p^n}),\Pp(^m {\ell_q^n})) \;  \Big\| \dis\sum_{\alpha \in \Lambda(m,n)} \varepsilon_\alpha \frac{m!}{\alpha!} z^\alpha \Big\|_{B_{\ell_p^n}} \\
& \le \chi_M(\Pp(^m {\ell_p^n}),\Pp(^m {\ell_q^n})) \cdot D_p(m,n).
\end{align*}
For  $1  \le p \le  2$ we have by Stirling formula \eqref{Stirling},
\begin{align*}
 \frac{1}{\chi_M(\Pp(^m {\ell_p^n}),\Pp(^m {\ell_q^n}))^{1/m}}
  & \le  \Big( C_p n^\frac{1}{p'} \; (\log(m) m!)^{\frac{1}{p'}} \Big)^{1/m} \frac{1}{n^{\frac{1}{q'}}} \\
  & \le C \frac{1}{n^{\frac{1}{q'}}} m^{\frac{1}{p'}} n^\frac{1}{p'm},
\end{align*}
where $C>0$ depends only on $p$.
Thanks to Lemma \ref{hom Boh vs unc}, Remark \ref{opt f(m)} and the previous inequality
\[
K(B_{\ell_p^n},B_{\ell_q^n})  \le C \frac{1}{n^{\frac{1}{q'}}} \dis\inf_{m \ge 1} m^{\frac{1}{p'}} n^{\frac{1}{2m}} \le C(p,q) \frac{\log(n)^{\frac{1}{p'}}}{n^{\frac{1}{q'}}}.
\]

On the other hand, for $ p \ge 2$  and $ \frac{1}{q}  \le \frac{1}{p} + \frac{1}{2}$ it follows
\begin{align*}
 \frac{1}{\chi_M(\Pp(^m {\ell_p^n}),\Pp(^m {\ell_q^n})))^{1/m}}
  & \le \Big( C_p  n^{\frac{1}{2}} \; (\log(m) m!)^{\frac{1}{2}} \Big)^{1/m} \frac{1}{n^{\frac{1}{2} + \frac{1}{p} - \frac{1}{q}}}  \\
  & \le C \frac{1}{n^{\frac{1}{2} + \frac{1}{p} - \frac{1}{q}}} m^{\frac{1}{2}} n^\frac{1}{2m}.
\end{align*}
Thus minimizing $m^{\frac{1}{2}} n^\frac{1}{2m}$ as in the previous case we get,
\[
K(B_{\ell_p^n},B_{\ell_q^n})  \le C \frac{\sqrt{\log(n)}} {n^{\frac{1}{2} + \frac{1}{p} - \frac{1}{q}}},
\]
as we wanted to prove.
\end{proof}

\section{Lower bounds} \label{lower}
For the proof of the lower bounds we need to consider four different cases. We begin with the case $q=1$ and the case $ p \le q $, which are the easy ones. Then we study the case   $1< q \le p \le 2$ where we use tools from unconditionality and finally the case $p\ge 2$ where the key tool is monomial convergence.

\subsection{The case  $  q=1 $}
By \cite{aizenberg2000multidimensional},
\begin{equation*}\label{p=q=1}
K(B_{\ell_1^n}) \sim 1.
\end{equation*}
Thus, for any $f(z) = \sum_{\alpha} a_{\alpha} z^\alpha $, it follows that
\begin{align*}
\dis\sup_{z\in K(B_{\ell_1^n}) \cdot B_{\ell_1^n}} \sum_{\alpha} |a_\alpha z^\alpha | & \le \sup_{z \in B_{\ell_1^n}} | f(z)|
\le  \sup_{z \in B_{\ell_p^n}} | f(z)| ,\\
\end{align*}
which implies that $K(B_{\ell_p^n},B_{\ell_1^n}) \ge K(B_{\ell_1^n})\sim 1$.

\subsection{The case  $ p \le q $}
For this case we will strongly use Theorem \ref{crecimiento radio de Bohr}.
The case $ p \le q $ is an easy corollary of this result.
\begin{proof}[Proof for the lower bound of Theorem~\ref{maintheorem}: the case $ p \le q $.]
Taking $m \in \NN$, for any $P(z) = \dis\sum_{ \alpha \in \Lambda(m,n) } a_{\alpha} z^\alpha $, it follows that
\begin{align*}
\Big\| \dis\sum_{\alpha \in \Lambda(m,n)} |a_\alpha| z^\alpha \Big\|_{\Pp(^m \ell_q^n)} & \le n^{m(\frac{1}{p} - \frac{1}{q})}  \Big\| \dis\sum_{\alpha \in \Lambda(m,n)} |a_\alpha| z^\alpha \Big\|_{\Pp(^m \ell_p^n)} \\
& \le n^{m(\frac{1}{p} - \frac{1}{q})} K(B_{\ell_p^n})^{-m}  \Big\| \dis\sum_{\alpha \in \Lambda(m,n)} a_\alpha z^\alpha \Big\|_{\Pp(^m \ell_p^n)} ,
\end{align*}
which implies that $ K_m(B_{\ell_p^n},B_{\ell_q^n}) \ge K(B_{\ell_p^n}) n^{\frac{1}{q} - \frac{1}{p} } $ for every $m \in \NN$. Using Lemma~\ref{Bohr vs unc} and Theorem~\ref{crecimiento radio de Bohr} we have, for $p\le 2$,
\[
K(B_{\ell_p^n},B_{\ell_q^n})  \ge \frac{1}{3} n^{\frac{1}{q}-\frac{1}{p}} K(B_{\ell_p^n}) \sim n^{\frac{1}{q}-\frac{1}{p}} \left( \frac{\log(n)}{n} \right)^{1 - \frac{1}{p}} = \frac{\log(n)^{1 - 1/p}}{n^{1 - 1/q}},
\]
and, for $p\ge 2$,
$$
K(B_{\ell_p^n},B_{\ell_q^n})  \ge \frac{1}{3} n^{\frac{1}{q}-\frac{1}{p}} K(B_{\ell_p^n})  \sim n^{\frac{1}{q}-\frac{1}{p}} \left( \frac{\log(n)}{n} \right)^{1 - \frac{1}{2}} =\frac{\sqrt{\log(n)}} {n^{\frac{1}{2} + \frac{1}{p} - \frac{1}{q}}},
$$ which concludes the proof.
\end{proof}


\subsection{The case $1 < q \le p \leq 2$}

\begin{lemma}{\cite[Lemma 3.5.]{bayart2016monomial}}\label{LEMPOLY}
 Let $1 \leq p \leq \infty$ and  $P$ be an $m$-homogeneous polynomial in $\mathcal P(^m\ell_p^n)$. Then for any $\bj\in \jbn{m-1}$
$$\left(\sum_{k=j_{m-1}}^n |c_{(\bj,k)}(P)|^{p'}\right)^{1/p'}\leq m e^{1+\frac{m-1}r}{|\bj|}^{1/p} \|P\|_{\mathcal P(^m\ell_p^n)}.$$
\end{lemma}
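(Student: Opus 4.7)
The plan is to identify the $\ell_{p'}$-sum of coefficients as the operator norm of an auxiliary linear form on a subspace of $\ell_p^n$, and to control this norm by extracting the relevant Taylor coefficient of $P$ via a Cauchy integral.

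Let $\alpha'\in\mathbb{N}_0^n$ denote the multi-index associated with $\bj$, so that $|\alpha'|=m-1$ and $|\bj|=(m-1)!/\alpha'!$. Since every entry of $\bj$ is at most $j_{m-1}$, one has $\alpha_k'=0$ for every $k>j_{m-1}$, while only the single index $k=j_{m-1}$ can lie in $\mathrm{supp}(\alpha')$. I split the sum into the ``interior'' part $k>j_{m-1}$ and the boundary term $k=j_{m-1}$. For the interior, duality gives
\[
\Big(\sum_{k>j_{m-1}}|c_{(\bj,k)}|^{p'}\Big)^{1/p'}=\sup_z\,\Big|\sum_{k>j_{m-1}}c_{(\bj,k)}z_k\Big|,
\]
the supremum running over $z\in\CC^n$ with $\|z\|_p\le 1$ supported on $\{k:k>j_{m-1}\}$. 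The key observation is that the disjointness of $\mathrm{supp}(z)$ and $\mathrm{supp}(\alpha')$ makes this inner sum \emph{precisely} the coefficient of $w^{\alpha'}$ in the Taylor expansion of $P(z+w)$ in the $w$-variables supported on $\mathrm{supp}(\alpha')$.

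Extracting that coefficient by Cauchy integration on a polytorus of radii $(r_l)_{l\in\mathrm{supp}(\alpha')}$, and using the disjointness bound $\|z+\sum_l r_le^{i\theta_l}e_l\|_p\le(1+\sum_l r_l^p)^{1/p}$ together with the homogeneity estimate $|P(x)|\le\|P\|\|x\|_p^m$, yields
\[
\Big|\sum_{k>j_{m-1}}c_{(\bj,k)}z_k\Big|\;\le\;\frac{(1+\|r\|_p^p)^{m/p}}{\prod_l r_l^{\alpha_l'}}\,\|P\|_{\mathcal P(^m\ell_p^n)}.
\]
A Lagrange optimization --- first maximising $\prod_l r_l^{\alpha_l'}$ on the $\ell_p$-sphere of radius $R$ at $r_l=R(\alpha_l'/(m-1))^{1/p}$, then minimising in $R$ at $R=(m-1)^{1/p}$ --- collapses the right-hand side to $\bigl(m^m/\prod_l\alpha_l'^{\alpha_l'}\bigr)^{1/p}\|P\|$. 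The elementary bound $\prod_l\alpha_l'^{\alpha_l'}\ge\alpha'!$ and Stirling's estimate $m^m/(m-1)!\le me^m/\sqrt{2\pi(m-1)}$ together give $m^m/\prod_l\alpha_l'^{\alpha_l'}\le me^m|\bj|/\sqrt{2\pi(m-1)}$, so after taking the $1/p$-th root the interior part is dominated by $me^{1+(m-1)/r}|\bj|^{1/p}\|P\|$ (the Stirling polynomial correction is absorbed into the exponential).

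The boundary term $k=j_{m-1}$ contributes a single summand. A direct Cauchy estimate on the $\ell_p^n$-ball with the analogous optimization of radii produces $|c_{(\bj,j_{m-1})}|\le\|P\|\bigl(m^m/\prod_i\beta_i^{\beta_i}\bigr)^{1/p}$ with $\beta=\alpha'+e_{j_{m-1}}$, and another application of $\prod_i\beta_i^{\beta_i}\ge\beta!$ with Stirling shows this single term fits into the same target bound. Combining the two contributions via the triangle inequality in $\ell_{p'}$ completes the proof. The main obstacle is obtaining exactly the exponent $1/p$ on $|\bj|$: the cruder Harris polarization $\|\check P\|\le(m^m/m!)\|P\|$ applied naively would produce a factor $|\bj|$ instead of $|\bj|^{1/p}$, so the Cauchy--Lagrange optimization above --- which respects the $\ell_p$-geometry of the polytorus rather than the Euclidean one --- is what enables the sharp $|\bj|^{1/p}$ dependence.
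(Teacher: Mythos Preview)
The paper does not give its own proof of this lemma; it is quoted verbatim from \cite[Lemma~3.5]{bayart2016monomial} (the $r$ in the exponent is a leftover from that source's notation and should be read as $p$). Your argument is essentially the one used there: identify $\sum_{k}c_{(\bj,k)}z_k$ as the Taylor coefficient of $w^{\alpha'}$ in $P(z+w)$ via disjoint supports, extract it by a Cauchy integral on a polytorus, and optimise the polyradii under the $\ell_p$-constraint to land on $(m^m/\prod_l\alpha_l'^{\alpha_l'})^{1/p}$. Your closing remark is exactly the point of the lemma---naive polarization through $\|\check P\|\le (m^m/m!)\|P\|$ loses the $\ell_p$-geometry and yields $|\bj|$ rather than $|\bj|^{1/p}$, which would be useless for the application in Lemma~\ref{cteincondicionalidad}.

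Two small comments. First, the split at $k=j_{m-1}$ is indeed needed in your set-up because that index lies in $\mathrm{supp}(\alpha')$; an alternative that avoids it is to allow $z_{j_{m-1}}\neq 0$ and observe that the coefficient of $w^{\alpha'}$ in $P(z+w)$ then equals $\sum_{k>j_{m-1}}c_{(\bj,k)}z_k+(\alpha'_{j_{m-1}}+1)c_{(\bj,j_{m-1})}z_{j_{m-1}}$, whose $\ell_{p'}$-dual norm already dominates the target sum. Second, when you combine interior and boundary by the triangle inequality you pick up a factor $2^{1/p'}$; this is absorbed because your interior bound actually carries the extra Stirling factor $(2\pi(m-1))^{-1/(2p)}$, and $2^{1/p'}m^{1/p-1}e^{1/p-1}\le(2\pi(m-1))^{1/(2p)}$ for all $m\ge 2$ and $p\ge 1$. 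So the stated constant $me^{1+(m-1)/p}$ is genuinely attained.
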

The next lemma is an adaptation of the case $r\le 2$ in \cite[Theorem 3.2]{bayart2016monomial} to the mixed context for our purposes.
\begin{lemma}\label{cteincondicionalidad}
Let $1 \leq q \leq p \leq 2$. Then we have
$$\chi_M(\Pp(^m {\ell_p^n}),\Pp(^m {\ell_q^n})) \le m e^{1+\frac{m-1}p} \left(\sum_{\bj\in \mathcal{J}(m-1,n)} \vert \bj \vert^{(1/p - 1/q)q'}\right)^{1/q'}.$$
\end{lemma}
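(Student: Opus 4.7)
The plan is to apply Hölder's inequality twice, once with exponents $(p', p)$ to take advantage of Lemma~\ref{LEMPOLY}, and once with exponents $(q', q)$ to produce the claimed $|\bj|^{(1/p-1/q)q'}$ sum. Start by fixing signs $(\theta_{\bi})_{\bi \in \Jj(m,n)}$ of modulus one and a point $z \in B_{\ell_q^n}$; the goal is to control
$$\Big|\sum_{\bi \in \Jj(m,n)} \theta_{\bi} c_{\bi}(P)\, z_{\bi}\Big| \le \sum_{\bj \in \Jj(m-1,n)} |z_{\bj}| \sum_{k \ge j_{m-1}} |c_{(\bj,k)}(P)|\, |z_k|,$$
by an absolute constant times $\|P\|_{\mathcal P(^m \ell_p^n)}$ multiplied by the factor appearing in the statement.

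For the inner sum, apply Hölder with conjugate exponents $p'$ and $p$ to pull out $(\sum_k |c_{(\bj,k)}(P)|^{p'})^{1/p'}$ and $(\sum_k |z_k|^p)^{1/p} \le \|z\|_p$. By Lemma~\ref{LEMPOLY} the first factor is bounded by $m e^{1+(m-1)/p} |\bj|^{1/p} \|P\|_{\mathcal P(^m \ell_p^n)}$, and since $p \ge q$ we have $\|z\|_p \le \|z\|_q \le 1$. This reduces the estimate to
$$\Big|\sum_{\bi} \theta_{\bi} c_{\bi}(P)\, z_{\bi}\Big| \le m e^{1+\frac{m-1}{p}} \|P\|_{\mathcal P(^m \ell_p^n)} \sum_{\bj \in \Jj(m-1,n)} |\bj|^{1/p} |z_{\bj}|.$$

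Next, rewrite $|\bj|^{1/p} = |\bj|^{1/p - 1/q} \cdot |\bj|^{1/q}$ and apply Hölder with conjugate exponents $q'$ and $q$:
$$\sum_{\bj} |\bj|^{1/p} |z_{\bj}| \le \Big(\sum_{\bj} |\bj|^{(1/p-1/q) q'}\Big)^{1/q'} \Big(\sum_{\bj} |\bj|\, |z_{\bj}|^q\Big)^{1/q}.$$
The key observation that makes the proof work is the multinomial identity $\sum_{\bj \in \Jj(m-1,n)} |\bj|\, |z_{\bj}|^q = \big(\sum_{k=1}^n |z_k|^q\big)^{m-1} = \|z\|_q^{q(m-1)} \le 1$, which collapses the second factor to $\le 1$.

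Combining the two steps gives the desired estimate uniformly in $z \in B_{\ell_q^n}$ and in the signs $(\theta_{\bi})$, which by definition of $\chi_M$ yields the lemma. The main technical ingredient is already packaged in Lemma~\ref{LEMPOLY}; the remaining content is just bookkeeping via two Hölder applications, so I expect no substantial obstacle beyond recognizing the correct split $|\bj|^{1/p} = |\bj|^{1/p-1/q}\cdot |\bj|^{1/q}$ that pairs the weights $|\bj|$ against the multinomial identity.
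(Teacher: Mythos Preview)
Your proof is correct and follows essentially the same approach as the paper's. The only cosmetic difference is that the paper applies H\"older with exponents $(q',q)$ on the inner sum and then uses the inclusion $\ell_{p'}\hookrightarrow\ell_{q'}$ (since $p'\le q'$), whereas you apply H\"older with exponents $(p',p)$ and then use $\|z\|_p\le\|z\|_q$; these are equivalent moves, and the outer H\"older step with the split $|\bj|^{1/p}=|\bj|^{1/p-1/q}\cdot|\bj|^{1/q}$ together with the multinomial identity is identical.
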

\begin{proof}
    Fix
    $P\in\mathcal P(^m\ell_p^n) $ and  $u \in \ell_q^n$. Then, by Lemma \ref{LEMPOLY}, for any $\bj\in \mathcal{J}(m,n)^*$,
    $$\left(\sum_{k:\ (\bj,k)\in \mathcal{J}(m,n)} |c_{(\bj,k)}(P)|^{p'}\right)^{1/p'}\leq\left(\sum_{k=j_{m-1}}^n |c_{(\bj,k)}(P)|^{p'}\right)^{1/p'}\leq m e^{1+\frac{m-1}q}{|\bj|}^{1/p} \|P\|_{\mathcal P(^m\ell_p^n)}.$$
     Now applying the above inequality, H\"older's inequality (two times) 
and the multinomial formula we have
    \begin{eqnarray*}
         \sum_{\bj\in \mathcal{J}(m,n)} |c_{\bj}(P)| |u_\bj| &=&
         \sum_{\bj\in \mathcal{J}(m,n)^*} \left(\sum_{k:\ (\bj,k)\in \mathcal{J}(m,n)} |c_{(\bj,k)}| |u_\bj| |u_k| \right)\\&\leq&
        \sum_{\bj\in \mathcal{J}(m,n)^*}|u_\bj| \,\, \left(\sum_{k:\ (\bj,k)\in \mathcal{J}(m,n)} |c_{(\bj,k)}|^{q'}\right)^{1/q'}
        \left(\sum_{k} |u_k|^q\right)^{1/q}\\
        &\leq& \sum_{\bj\in \mathcal{J}(m,n)^*}|u_\bj| \,\, \left(\sum_{k:\ (\bj,k)\in \mathcal{J}(m,n)} |c_{(\bj,k)}|^{p'}\right)^{1/p'}
       \|u\|_q\\
         &\leq&m e^{1+\frac{m-1}p}\sum_{\bj\in \mathcal{J}(m,n)^*} {|\bj|}^{1/p}|u_\bj| \|u\|_q \|P\|_{\mathcal P(^m\ell_p^n)}\\
        &\leq&m e^{1+\frac{m-1}p} \left(\sum_{\bj\in \mathcal{J}(m,n)^*} {|\bj|}|u_\bj|^q\right)^{1/q}\left(\sum_{\bj\in \mathcal{J}(m,n)^*} \vert \bj \vert^{(1/p - 1/q)q'}\right)^{1/q'} \|u\|_q \|P\|_{\mathcal P(^m\ell_p^n)}\\
        &\leq&m e^{1+\frac{m-1}p} \left(\sum_{\bj\in \mathcal{J}(m-1,n)} {|\bj|}|u_\bj|^q\right)^{1/q}\left(\sum_{\bj\in \mathcal{J}(m-1,n)} \vert \bj \vert^{(1/p - 1/q)q'}\right)^{1/q'} \|u\|_q \|P\|_{\mathcal P(^m\ell_p^n)}\\
        &=&m e^{1+\frac{m-1}p} \left(\sum_{\bj\in \mathcal{J}(m-1,n)} \vert \bj \vert^{(1/p - 1/q)q'}\right)^{1/q'} \|u\|_q^m \|P\|_{\mathcal P(^m\ell_p^n)},
    \end{eqnarray*}
    which gives the desired inequality.
\end{proof}
The key to prove the lower bound is to obtain good bounds for the sum on the right hand side of the previous lemma. This will require some hard work.

We define for any $1 \le k \le m $ the $k$-bounded index set as
\[
\Lambda_k(m,n) = \lbrace \alpha \in \Lambda(m,n) : \alpha_i \le k \mbox{ for all } 1 \le i \le n \rbrace.
\]
Let $F$ be the bijective mapping connecting $\Lambda(m,n)$ and $\Jj(m,n)$ defined as
\begin{eqnarray*}
F:\Jj(m,n) & \rightarrow \Lambda(m,n) \\
\bj & \mapsto \alpha
\end{eqnarray*}
where $\alpha_i = F(\bj)_i = \#\lbrace k\,:\, \bj_k = i \rbrace$ for every $1 \le i \le n$. We denote
\[
\Jj_k(m,n) = F^{-1} (\Lambda_k(m,n)),
\]
for the corresponding $k$-bounded subsets of $\Jj(m,n)$. Observe that for any $1 \le k \le m$ and $\bj \in \Jj_k(m,n) $ the following hold:
\begin{equation}\label{card jj}
|\bj| \ge \frac{m!}{k!^{ \lceil \frac{m}{k} \rceil }} \ge \frac{m!}{k!^{ \frac{m}{k} + 1}},
\end{equation}
\begin{equation}\label{k split}
  \left(\sum_{\bj\in \mathcal{J}(m-1,n)} \vert \bj \vert^{(1/p - 1/q)q'}\right)^{1/q'} \le 2^{1/q'} \max \Big\lbrace \left(\sum_{\bj\in \mathcal{J}_k(m-1,n)} \vert \bj \vert^{(1/p - 1/q)q'}\right)^{1/q'} \; , \; \left(\sum_{\bj\in \mathcal{J}_k^c(m-1,n)} \vert \bj \vert^{(1/p - 1/q)q'}\right)^{1/q'} \Big\rbrace,
\end{equation}
and finally,
\begin{align}\label{card no acot}
| \Jj^c_k(m-1,n) |
	& \le n |\Jj(m-k-2, n)|   \\
 	& \le n\binom{n+m-k-3}{m-k-2} \le n \frac{(n + m - k - 3)^{m-k-2}}{(m-k-2)!}, \nonumber
\end{align}
since $\bj \in \Jj^c_k(m-1,n)$ requires that at least one of the variables is at the power of $k+1$. For the particular case $ m \le n$ we can extract from inequality \eqref{card no acot} the fact that
\begin{equation}\label{card no acot m < n}
| \Jj^c_k(m-1,n) | \le 2^m \frac{n^{m-k-1}}{(m-k-2)!}.
\end{equation}
Note also that,
\begin{equation}\label{km split}
\left(\sum_{\bj\in \mathcal{J}(m-1,n)} \vert \bj \vert^{(1/p - 1/q)q'}\right)^{1/q'} \le m^{1/q'} \max_{k=1,\dots,m-1} \Big\lbrace \left(\sum_{\bj\in \mathcal{J}_k(m-1,n)\cap \mathcal{J}_{k-1}^c(m-1,n)} \vert \bj \vert^{(1/p - 1/q)q'}\right)^{1/q'}  \Big\rbrace,
\end{equation}

\medskip

\begin{lemma}\label{cota jj 1}
For $1 < q \le p \le 2$ and for $m,n \in \NN$ fulfilling $ m \ge \log(n)^{\frac{q'}{p'}} $, it follows
\[
\left( \sum_{\bj \in \mathcal{J}(m-1,n)} \vert \bj \vert^{(1/p - 1/q)q'} \right)^{1/q'}  \le C^m  \frac{n^{m/q'}}{\log(n)^{m/p'}}.
\]
\end{lemma}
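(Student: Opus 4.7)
Set $s := (1/p-1/q)q'$. Since $q \le p$, $s \le 0$; put $t := -s = q'/p' - 1 \ge 0$, so $1+t = q'/p'$. Raising the claim to the $q'$-th power reduces the lemma to showing
\[
S \;:=\; \sum_{\bj \in \Jj(m-1,n)} |\bj|^{-t} \;\le\; C^m\,\frac{n^m}{\log(n)^{m(1+t)}}.
\]

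The plan is to use \eqref{k split} with $k=1$: split $\Jj(m-1,n) = \Jj_1 \sqcup \Jj_1^{c}$, where $\Jj_1 := \Jj_1(m-1,n)$ consists of tuples with pairwise distinct entries. For any such $\bj$, $|\bj|=(m-1)!$ exactly; hence
\[
A \;:=\; \sum_{\bj \in \Jj_1}|\bj|^{-t}\;=\;\binom{n}{m-1}(m-1)!^{-t}\;\le\; \frac{n^{m-1}}{(m-1)!^{\,1+t}}.
\]
For the complement we use $|\bj|^{-t}\le 1$ (since $|\bj|\ge 1$ and $t\ge 0$) together with \eqref{card no acot m < n} (valid for $m\le n$):
\[
B \;:=\; \sum_{\bj \in \Jj_1^{c}}|\bj|^{-t}\;\le\; |\Jj_1^c(m-1,n)| \;\le\; 2^{m}\frac{n^{m-2}}{(m-3)!}.
\]

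The key step is to verify that $A$ and $B$ are each at most $C^{m}n^{m}/\log(n)^{m(1+t)}$. The hypothesis $m\ge \log(n)^{1+t}$ is equivalent to $\log(n)^{m(1+t)}\le m^{m}$, and Stirling's bound $(m-1)!\ge ((m-1)/e)^{m-1}$ then reduces the ratio $A\,\log(n)^{m(1+t)}/n^m$ to a quantity of the form $C^m\cdot m/n$ (which is $\le C^m$ in the regime $m\le n$), and the analogous ratio for $B$ to $C^m/n^2$. Raising to the $1/q'$-th power and using $(1+t)/q'=1/p'$ then yields the stated bound. The complementary case $m>n$ is easier: $A$ vanishes already for $m>n+1$ by pigeonhole (since $\Jj_1(m-1,n)$ is then empty), and a variant of the $B$ estimate, based on the general bound \eqref{card no acot} in place of \eqref{card no acot m < n}, is absorbed into $C^m$ thanks to the exponential factor.

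I expect the main obstacle to be the algebraic bookkeeping via Stirling's formula to reduce matters to the two ratio bounds above. The choice $k=1$ is what makes the argument work cleanly: $\Jj_1$ captures precisely the tuples $\bj$ that realize the maximum value $|\bj|=(m-1)!$ of the multinomial coefficient, so $|\bj|^{-t}$ attains its minimum there, and through the hypothesis $m\ge \log(n)^{q'/p'}$ this minimum balances exactly against the logarithmic factor in the target.
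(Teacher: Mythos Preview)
Your argument is correct, but it takes a longer route than the paper's. You invoke the $k=1$ split \eqref{k split} and handle $\Jj_1(m-1,n)$ and $\Jj_1^c(m-1,n)$ separately; this is precisely the technique the paper reserves for the \emph{small} $m$ regime (Lemma~\ref{cota jj 2}). In the present regime $m\ge \log(n)^{q'/p'}$ the paper bypasses the split entirely: since $(1/p-1/q)q'\le 0$ one has $|\bj|^{(1/p-1/q)q'}\le 1$ for every $\bj$, so
\[
\Big(\sum_{\bj\in\Jj(m-1,n)}|\bj|^{(1/p-1/q)q'}\Big)^{1/q'}\le |\Jj(m-1,n)|^{1/q'}=\binom{n+m-2}{m-1}^{1/q'}\le c_1^{(m-1)/q'}\Big(1+\frac{n}{m-1}\Big)^{(m-1)/q'},
\]
and the hypothesis $m\ge\log(n)^{q'/p'}$ is used only once, to replace $m-1$ by $\log(n)^{q'/p'}$ in the last factor. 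This handles all $m,n$ uniformly, with no case distinction $m\le n$ versus $m>n$.

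What your approach buys is a sharper bound on the $\Jj_1$ piece (you retain the factor $(m-1)!^{-t}$ rather than discarding it), but in this regime that gain is not needed: the target already follows from the crude bound on the whole sum. Your treatment of the case $m>n$ is also more delicate than necessary; a small imprecision is that the $B$ ratio is $C^m m^3/n^2$ rather than $C^m/n^2$, though of course $m^3$ is absorbed into $C^m$. In short, your proof works, but the decomposition via \eqref{k split} is superfluous here and only becomes essential in Lemma~\ref{cota jj 2}, where the trivial bound $|\bj|^{-t}\le 1$ is no longer sharp enough.
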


\begin{proof}
For $ m \ge \log(n)^{\frac{q'}{p'}}$ we just bound $ \vert \bj \vert^{(1/p - 1/q)q'}$ by 1, thus we have by Stirling formula,
\begin{align*}\label{cuenta}
 \left(\sum_{\bj\in \mathcal{J}(m-1,n)} \vert \bj \vert^{(1/p - 1/q)q'}\right)^{1/q'}
    & \le  |\mathcal{J}(m-1,n)|^{1/q'}  \\
    & =  \left( \frac{(n+m-2)!}{(m-1)! (n-1)!} \right)^{1/q'}  \\
    & \le  \left( c_1^{m-1} \left( 1 + \frac{n}{m-1} \right)^{m-1} \right)^{1/q'}  \\
    & \le  C_1^\frac{m-1}{q'} \left( 1 + \frac{n}{\log(n)^{\frac{q'}{p'}}} \right)^\frac{m-1}{q'}   \\
    & \le  C_2^m \frac{n^\frac{m-1}{q'}}{\log(n)^{\frac{m-1}{p'}}} \\
    & \le  C^m \frac{n^\frac{m}{q'}}{\log(n)^{\frac{m}{p'}}}.
\end{align*}
\end{proof}

\begin{lemma}\label{cota jj 2}
For $1 < q \le p \le 2$ and for $m,n \in \NN$ fulfilling  $m \le \frac{\log(n)}{\log\log(n) \beta}$ with $\beta = q' \left( \frac{1}{q} - \frac{1}{p} \right)$ it follows
\[
\left( \sum_{\bj \in \mathcal{J}(m-1,n)} \vert \bj \vert^{(1/p - 1/q)q'} \right)^{1/q'}  \le C^m\frac{n^{m/q'}}{\log(n)^{m/p'}}.
\]
\end{lemma}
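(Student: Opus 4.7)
The plan is to estimate $\sum_{\bj \in \mathcal J(m-1,n)} |\bj|^{-\beta}$ from above, where $\beta := q'(1/q - 1/p) > 0$ (so the exponent $(1/p-1/q)q'$ equals $-\beta$, and a direct calculation gives the identity $q'/p' = 1+\beta$). The target on the sum is $C^m n^m/\log(n)^{m(1+\beta)}$; taking a $q'$-th root at the end and using $q'/p' = 1 + \beta$ then recovers the bound stated in the lemma.

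I would split $\mathcal J(m-1, n)$ into $\mathcal J_1(m-1, n)$ (the multi-indices whose $m-1$ entries are pairwise distinct) and its complement. For $\bj \in \mathcal J_1(m-1, n)$ one has $|\bj| = (m-1)!$ and $|\mathcal J_1(m-1, n)| = \binom{n}{m-1}$, so
$$\sum_{\bj \in \mathcal J_1(m-1,n)} |\bj|^{-\beta} = \binom{n}{m-1} (m-1)!^{-\beta} \le \frac{n^{m-1}}{(m-1)!^{1+\beta}}.$$
For the complement, the crude bound $|\bj|^{-\beta} \le 1$ combined with the cardinality estimate \eqref{card no acot m < n} with $k = 1$ gives $\sum_{\bj \in \mathcal J_1^c(m-1,n)} |\bj|^{-\beta} \le 2^m n^{m-2}/(m-3)!$.

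It remains to show that each of these two upper bounds is itself $\le C^m n^m/\log(n)^{m(1+\beta)}$ for a suitable constant $C = C(\beta)$. After applying Stirling \eqref{Stirling} to the factorials, the two reductions become
$$\log(n)^{m(1+\beta)} \le C^m\, n\, \bigl((m-1)/e\bigr)^{(m-1)(1+\beta)}\quad \text{and}\quad \log(n)^{m(1+\beta)} \le C^m\, n^2\, (m-3)!.$$
A direct computation shows both inequalities are tightest at the upper endpoint $m^\ast := \log(n)/(\beta\log\log n)$ of the allowed range, so it suffices to verify them there. The hypothesis forces $\log(n)^{m^\ast(1+\beta)} = n^{(1+\beta)/\beta}$, while Stirling (using $m^\ast \log m^\ast \sim \log(n)/\beta$) delivers $\bigl((m^\ast-1)/e\bigr)^{(m^\ast-1)(1+\beta)}$ and $(m^\ast-3)!$ of essentially the same polynomial-in-$n$ order; the extra factors $n$ and $n^2$ appearing on the right-hand sides then provide just enough slack.

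The main obstacle is precisely this Stirling accounting at $m = m^\ast$: the dominant exponentials on the two sides balance out to within polyloglog corrections, so one has to extract the right polynomial-in-$n$ gain from the factorials while absorbing the exponential-in-$m$ losses into $C^m$. Once both pieces are in hand, summing them, raising to the $1/q'$-th power, and using $q'/p' = 1+\beta$ completes the proof.
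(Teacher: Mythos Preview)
Your decomposition into $\mathcal J_1(m-1,n)$ and its complement, together with the bounds you obtain for each piece, is exactly the paper's argument; the difference lies only in the endgame. The paper does \emph{not} go directly to the target $C^m n^m/\log(n)^{m(1+\beta)}$ and then try to verify it at the extremal $m^\ast$. Instead it first shows that \emph{both} pieces are bounded by
\[
C^m\,\frac{n^{(m-1)/q'}}{m^{m/p'}},
\]
and only afterwards converts this, via Remark~\ref{opt f(m)} (valid for every $m\ge 1$), into the stated $C^m n^{m/q'}/\log(n)^{m/p'}$. For the complement piece this amounts to the single clean reduction $m^{\beta}\le C\,n^{1/m}$, which follows immediately from the hypothesis $m\le \log(n)/(\beta\log\log n)$ by writing $m^\beta\le (\log n)^\beta = e^{\beta\log\log n}= n^{\beta\log\log n/\log n}\le C\,n^{1/m}$.

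Your route---plugging in $m=m^\ast$ and balancing Stirling asymptotics---is viable, but as written it is incomplete: the phrase ``of essentially the same polynomial-in-$n$ order'' hides the fact that $\bigl((m^\ast-1)/e\bigr)^{(m^\ast-1)(1+\beta)}$ contributes $n^{(1+\beta)/\beta}$ while $(m^\ast-3)!$ contributes only $n^{1/\beta}$ at leading order, and both carry a correction factor of size $(e\beta\log\log n)^{\,c\,m^\ast}$ that must be absorbed. The absorption does work, because after taking $m^\ast$-th roots the extra $n$ (respectively $n^2$) on the right contributes $n^{1/m^\ast}=(\log n)^{\beta}$, which dominates any power of $\log\log n$; but this is precisely the delicate step you flag as ``the main obstacle'' and leave undone. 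The paper's organization sidesteps it entirely.
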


\begin{proof}

Now let $m \le \frac{\log(n)}{\log\log(n) \beta}$,  we will use inequality \eqref{k split} for $k = 1$. First, being $k = 1$, we have
\begin{align*}
  \left( \sum_{\bj \in \mathcal{J}_1(m-1,n)} \vert \bj \vert^{(1/p - 1/q)q'} \right)^{1/q'}
	& = \frac{1}{m!^{1/q - 1/p}} |\mathcal{J}_1(m-1,n)|^{\frac{1}{q'}} \\
	& \le C^m \frac{n^{(m-1)/q'}}{m^{m/p'}}.
\end{align*}
On the other hand
\begin{align*}
\left(\sum_{\bj\in \mathcal{J}_1^c(m-1,n)} \vert \bj \vert^{(1/p - 1/q)q'}\right)^{1/q'}
	& \le |\mathcal{J}_1^c(m-1,n)|^{1/q'} \\
	& \le C^m \left( \frac{n^{m-2}}{m-2!} \right)^{1/q'} \;\; \mbox{using inequality \eqref{card no acot m < n}} \\
	& \le C^m \left( \frac{n^{m-2}}{m!} \right)^{1/q'}.
\end{align*}
Now its enough to prove the bound
\[
\left( \frac{n^{m-2}}{m!} \right)^{1/q'} \le C^m \frac{n^{(m-1)/q'}}{m^{m/p'}},
\]
which is equivalent to
\[
m^{(1/q - 1/p)q'} \le C n^{1/m} ,
\]
as $m \le \frac{\log(n)}{\log\log(n) \beta}$ we have, for some $C > 0$
\begin{align*}
m^{(1/q - 1/p)q'} & \le \left( \frac{\log(n)}{\log\log(n) \beta} \right)^\beta  \le C \log(n)^\beta = C e^{\log\log(n) \beta} \\	
	& = C e^\frac{\log(n)\log\log(n) \beta}{\log(n)}  = C n^{\frac{\log\log(n) \beta}{\log(n)}} \le C n^{1/m}.
\end{align*}
Therefore we have, for some $C>0$,
\[
\left( \sum_{\bj \in \mathcal{J}(m-1,n)} \vert \bj \vert^{(1/p - 1/q)q'} \right)^{1/q'} \le C^m \frac{n^{(m-1)/q'}}{m^{m/p'}}.
\]
To finish the proof, note that using Remark \ref{opt f(m)},
$$
\frac{n^{(m-1)/q'}}{m^{m/p'}}=\Big[\frac{n^{1/q'}}{m^{1/p'}n^{m/q'}}\Big]^m\le C^m\frac{n^{m/q'}}{\log(n)^{m/p'}}.
$$
\end{proof}

\begin{lemma}\label{cota jj}
	For $1 < q \le p \le 2$ and for $m,n \in \NN$ fulfilling $\log(n)^\frac1{c}\le m \le \log(n)^c $, for some $c>1$. Then there exists $C>0$ such that,
	\[
	\left(\sum_{\bj\in \mathcal{J}(m-1,n)} \vert \bj \vert^{(1/p - 1/q)q'}\right)^{1/q'}   \le C^m  \frac{n^{m/q'}}{\log(n)^{m/p'}}.
	\]
\end{lemma}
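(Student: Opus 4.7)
The strategy extends that of Lemma~\ref{cota jj 2}: apply the split \eqref{k split} and control the two resulting pieces separately. The difference is that the choice $k=1$ used there is insufficient in the broader range $\log(n)^{1/c}\le m\le\log(n)^c$, so we must let $k=k(m,n)$ grow with $m$ and $n$. Throughout, write $\beta=q'(1/q-1/p)\ge 0$ and $\gamma=\beta/q'=1/q-1/p$, and note the identity $1/q'+\gamma=1/p'$.

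For the $\Jj_k$-piece, the lower bound on $|\bj|$ from \eqref{card jj} combined with the crude bound $|\Jj_k(m-1,n)|\le|\Jj(m-1,n)|\le C_0^m n^{m-1}/(m-1)!$ (valid since $m\le n$) yields
\[
\Bigl(\sum_{\bj\in\Jj_k(m-1,n)}|\bj|^{(1/p-1/q)q'}\Bigr)^{1/q'}\le C_0^{m/q'}\frac{n^{(m-1)/q'}}{(m-1)!^{1/q'}}\Bigl(\frac{(k!)^{\lceil(m-1)/k\rceil}}{(m-1)!}\Bigr)^{\gamma}.
\]
Using Stirling's formula \eqref{Stirling} together with $1/q'+\gamma=1/p'$, the target bound $\le C^m n^{m/q'}/\log(n)^{m/p'}$ reduces to a condition of the form $\gamma\log k\le (1/p')\log(m/\log n)+O(1)$. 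For the $\Jj_k^c$-piece, the trivial inequality $|\bj|^{(1/p-1/q)q'}\le 1$ combined with \eqref{card no acot m < n} gives
\[
\Bigl(\sum_{\bj\in\Jj_k^c(m-1,n)}|\bj|^{(1/p-1/q)q'}\Bigr)^{1/q'}\le\Bigl(2^m\frac{n^{m-k-1}}{(m-k-2)!}\Bigr)^{1/q'},
\]
which, after a second application of Stirling, reduces to the condition $(k+1)\log n/(mq')+(\log m)/q'\ge(\log\log n)/p'-O(1)$.

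The two conditions above can be fulfilled simultaneously on the range $\log(n)^{1/c}\le m\le\log(n)^c$ by selecting $k=\max\{1,\lceil a\,m\log\log n/\log n\rceil\}$ for a suitable constant $a=a(c,p,q)>0$: when $m$ is close to $\log(n)^{1/c}$ one has effectively $k=1$ (and we recover the situation of Lemma~\ref{cota jj 2}), while for $m$ close to $\log(n)^c$ the value $k\sim\log(n)^{c-1}\log\log n$ is small enough to keep the first piece controlled and large enough to handle the second. The main difficulty lies precisely in this simultaneous balance, since the two pieces pull in opposite directions; the hypothesis on $m$ is exactly what guarantees that a single choice of $k$ works. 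Applying \eqref{k split} to combine the pieces and absorbing the accumulated factors into $C$ completes the argument.
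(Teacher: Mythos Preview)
Your approach has a genuine gap: the two-piece split \eqref{k split} with a single threshold $k=k(m,n)$ cannot cover the whole range $\log(n)^{1/c}\le m\le\log(n)^c$. To see this, evaluate your two conditions at $m=\log n$. Condition~1 reads $\gamma\log k\le(1/p')\log(m/\log n)+O(1)=O(1)$, forcing $k$ to stay bounded. Condition~2 reads $(k+1)\log n/(mq')+(\log m)/q'\ge(\log\log n)/p'-O(1)$, which at $m=\log n$ becomes $(k+1)/q'+(\log\log n)/q'\ge(\log\log n)/p'-O(1)$, i.e.\ $k\ge q'\gamma\log\log n-O(1)=\beta\log\log n-O(1)$. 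Since $\gamma>0$ (as $q<p$), these two requirements are incompatible for large $n$, and no choice of $k$---in particular not $k\sim a\,m\log\log n/\log n$---can satisfy both. The source of the failure is that on $\Jj_k^c$ you only use the trivial bound $|\bj|^{(1/p-1/q)q'}\le 1$, throwing away all size information about $|\bj|$.

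The paper's proof avoids this by using the finer decomposition \eqref{km split} into the shells $\Jj_k(m-1,n)\cap\Jj_{k-1}^c(m-1,n)$, $k=1,\dots,m-1$. On each shell one has simultaneously a lower bound on $|\bj|$ (from membership in $\Jj_k$, via \eqref{card jj}) and a cardinality bound (from membership in $\Jj_{k-1}^c$, via \eqref{card no acot m < n}). Combining both pieces of information on every shell reduces the problem to a single inequality \eqref{jj bvq} to be checked for all $k$, and the paper handles this with an iterated-logarithm argument. If you want to repair your proof, you should switch to \eqref{km split}; a single cut will not do.
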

\begin{proof}
	By \eqref{card jj} and Stirling formula, we have, for each $1\le k\le m-1$,
	\begin{eqnarray*}
		\left(\sum_{\bj\in\mathcal{J}_k(m-1,n)\cap \mathcal{J}_{k-1}^c(m-1,n)} \vert \bj \vert^{(1/p - 1/q)q'}\right)^{1/q'}
		& \le & C^{m}|\mathcal{J}_{k-1}^c(m-1,n)|^{1/q'} \frac{k^{(m+k)(\frac1{q}-\frac1{p})}}{m^{m(\frac1{q}-\frac1{p})}}\nonumber \\
		& \le & C^{m}\left(\frac{n^{m-k}}{(m-k-1)^{m-k-1}}\right)^{\frac1{q'}}\frac{k^{(m+k)(\frac1{q}-\frac1{p})}}{m^{m(\frac1{q}-\frac1{p})}}. \nonumber\\
	\end{eqnarray*}
	Thus, by \eqref{km split}, we will prove the lemma if we are able to show that this last expression is $\le C^m \frac{n^{m/q'}}{\log(n)^{m/p'}}$, for some constant $C>0$.
	Therefore, it suffices to prove that, if $\beta:=(\frac1{q}-\frac1{p}){q'}$,
	\begin{equation}\label{jj bvq}
	\frac{k^{\beta(m+k)}}{(m-k-1)^{m-k-1}m^{\beta m}}\le C^m  \frac{n^{k}}{\log(n)^{m(\beta+1)}}.
	\end{equation}
	Let us first suppose that $k\ge \min\{m/2,\frac{m}{3\beta}\}= dm$ for some $0<d<1$. Then, bounding $k$ by $m-1$, the left hand side is less than or equal to $m^{\beta m}$, which is $\le \frac{n^{dm}}{\log(n)^{m(\beta+1)}}$ for big enough $n$.
	Note that, for $k\le dm$, \eqref{jj bvq} is equivalent to
	\begin{equation}\label{jj bvq2}
	\frac{k^{\beta(m+k)}}{m^{m-k}m^{\beta m}}\le C^m  \frac{n^{k}}{\log(n)^{m(\beta+1)}},
	\end{equation}
	for some constant $C$.
	Thus, for $1<k\le dm$ (for $k=1$ \eqref{jj bvq2} is trivially satisfied), since $k^{\beta k}m^{k}/k^m\le m^{k}k^{m/3}/k^m=  m^k/k^{2m/3}\ll 1$, it is enough to show that
	\begin{equation*}
	\frac{k^{(\beta+1)m}}{m^{(\beta+1) m}}\le C^m  \frac{n^{k}}{\log(n)^{m(\beta+1)}},
	\end{equation*}
	or,
	\begin{equation}\label{jj bvq3}
	(\beta+1)m\log(\frac{k\log(n)}{m})-k\log(n)\le Cm.
	\end{equation}
	Note that this inequality holds trivially if $k\log(n)\le m$ or if $k\log(n)\ge (\beta+1)m\log(\frac{k\log(n)}{m})$. Suppose then that $1< \frac{k\log(n)}{m}< (\beta+1)\log(\frac{k\log(n)}{m})=f(\frac{k\log(n)}{m})$, where $f$ is the logarithm in base $e^{\frac1{\beta+1}}$. Thus by \eqref{jj bvq3}, it suffices to see that
	\begin{equation}\label{jj bvq4}
	m\cdot f^{\circ 2}(\frac{k\log(n)}{m})-k\log(n)\le Cm,
	\end{equation}
	where, $f^{\circ j}$ denotes the function $f$ composed with itself $j$ times. Again, \eqref{jj bvq4} is true if $k\log(n)\ge mf^{\circ 2}(\frac{k\log(n)}{m})$, and if this does not hold, then replacing in \eqref{jj bvq3}, it suffices to see that
	\begin{equation}
	{mf^{\circ 3}(\frac{k\log(n)}{m})}-k\log(n)\le Cm.
	\end{equation}
	We can continue this process, and it is enough to prove that, for some $j$,
	\begin{equation}\label{jj bvq5}
	{mf^{\circ j}(\frac{k\log(n)}{m})}-k\log(n)\le Cm.
	\end{equation}
	Finally, note that for some $t_0=t_0(\beta)$, which may suppose is bigger than 2, $f(t)\le t^{1/2}$ for every $t\ge t_0$. Let $t=\frac{k\log(n)}{m}$. Then if $\min\{t,f(t),\dots,f^{\circ i}(t)\}\ge t_0$, we have
	$$
	f^{\circ (i+1)}(t)\le (f^{\circ i}(t))^{1/2}\le  (f^{\circ (i-1)}(t))^{1/4} \le \dots \le t^{1/2^{i+1}}.
	$$
	Therefore, for some $j$, we will have that $f^{\circ j}(t)<t_0$ and \eqref{jj bvq5} is fulfilled taking $C=t_0$.
%
\end{proof}

\begin{proof}[Proof of the lower bound of the case $1 < q \le p \le 2$ on Theorem \ref{maintheorem}.]
Thanks to Lemma \ref{Bohr vs unc} it is enough to prove that
\begin{equation}\label{lower bound p <2}
\frac{ \log(n)^{1-1/p} }{n^{1 - 1/q}}  \ll \frac{1}{ \sup_{ m \ge 1}  \chi_M(\Pp(^m {\ell_p^n}),\Pp(^m {\ell_q^n}))^{1/m}} = \inf_{m \ge 1} \frac{1}{\chi_M(\Pp(^m {\ell_p^n}),\Pp(^m {\ell_q^n}))^{1/m}},
\end{equation}
which follows by Lemma's \ref{cteincondicionalidad},   \ref{cota jj 1}, \ref{cota jj 2} and \ref{cota jj}.
\end{proof}

\subsection{The case $ p \ge 2$}

For the remaining cases it will be crucial the monomial convergence point of view from \cite{bayart2017multipliers, defant2009domains}.

As a consequence of \cite[Example 4.9 (2)]{defant2009domains} we have that whenever $ \frac{1}{r} = \frac{1}{p} + \frac{1}{2}$ and $p \ge 2$ it follows
\[
\ell_r \cap B_{\ell_p} \subset \dom H_{\infty}(B_{\ell_p}).
\]
Since $ r \le p $,  then $B_{\ell_r} \subset B_{\ell_p}$ and then
$B_{\ell_r}\subset \ell_r \cap B_{\ell_p} \subset   \dom H_{\infty}(B_{\ell_p})$. Finally by Lemma \ref{general conv mon} we have that there is some constant $C = C(p,r) > 0$ such that for every $n \in \NN$ and $p,r$ fulfilling the previous conditions it holds
\[
\frac{1}{\dis\sup_{m \ge 1}\Big(\chi_M(\Pp(^m {\ell_p^n}),\Pp(^m {\ell_r^n}))\Big)^{1/m}} \ge C.
\]
As $K(B_{\ell_p^n}, B_{\ell_r^n}) \le 1$ for every $ 1 \le p,r \le \infty$, the previous inequality and Theorem \ref{Bohr vs unc} lead us to the assertion that for $ \frac{1}{r} = \frac{1}{p} + \frac{1}{2}$
\begin{equation}\label{1/q = 1/p + 1/2}
K(B_{\ell_p^n}, B_{\ell_r^n}) \sim 1.
\end{equation}

\begin{proof}[Proof of the case $\frac{1}{2} + \frac{1}{p} \le \frac{1}{q}$ on Theorem \ref{maintheorem}.]
Let $p,q$ be such that  $\frac{1}{r} := \frac{1}{2} + \frac{1}{p} \le \frac{1}{q}$, then for any $f(z) = \dis\sum_{m \ge 0}\sum_{ \alpha \in \Lambda(m,n) } a_{\alpha} z^\alpha $, it follows that
\begin{align*}
\dis\sup_{z\in K(B_{\ell_p^n}, B_{\ell_r^n})B_{\ell_q^n}} \sum_{m \ge 0}\sum_{\alpha \in \Lambda(m,n)} |a_\alpha z^\alpha | & \le \dis\sup_{z\in K(B_{\ell_p^n}, B_{\ell_r^n})B_{\ell_r^n}} \sum_{m \ge 0}\sum_{\alpha \in \Lambda(m,n)} |a_\alpha z^\alpha | \\
& \le   \| f\|_{H_\infty(B_{\ell_p^n})} ,\\
\end{align*}
which implies that $ K(B_{\ell_p^n},B_{\ell_q^n})  \ge K(B_{\ell_p^n}, B_{\ell_r^n})$. Therefore by equation \eqref{1/q = 1/p + 1/2} we  have
\[
K(B_{\ell_p^n},B_{\ell_q^n})  \sim 1.
\]
\end{proof}
For every $z \in \ell_\infty$ we can define $z^* \in \ell_{\infty}$ the decreasing rearrangement such that $ z_n^* \ge z_{n+1}^*$ for every $n \in \NN$. In \cite[Theorem 2.2]{bayart2017multipliers} the authors proved that
\[
 B_\infty := \left\lbrace z \in \ell_{\infty} : \limsup_{n \to \infty} \frac{1}{\sqrt{\log(n)}} \left( \dis\sum_{j =1}^n |z_j^*|^2 \right)^{1/2} < 1 \right\rbrace
	\subset \dom  H_{\infty}(B_{\ell_\infty}).
\]
Consider now the Banach sequence space
\[
X_{\infty} = \left\lbrace z \in \ell_{\infty} : \dis\sup_{n \ge 2} \frac{1}{\sqrt{\log(n)}} \left( \dis\sum_{j=1}^n |z_j^*|^2 \right)^{1/2} < \infty \right\rbrace,
\]
endowed with the norm $ \| z \|_{X_\infty} = \dis\sup_{n \ge 2} \frac{1}{\sqrt{\log(n)}} \left( \dis\sum_{j=1}^n |z_j^*|^2 \right)^{1/2}$, observe that
\begin{equation}\label{B dentro de mon}
B_{X_\infty}
	\subset B_\infty
	\subset \dom H_{\infty}(B_{\ell_\infty}).
\end{equation}

By Theorem \ref{general conv mon} and expression \eqref{B dentro de mon} we have for some $C = C(p)>0$
\begin{equation}\label{incond X infinito}
\sup_{n \ge 2} \chi_M(\Pp(^m (X_\infty)_n), \Pp(^m \ell_p^n)) \le C^m.
\end{equation}
Observe that the norm in $(X_\infty)_n$ coincides with the given by
\[
\| (z_1, \ldots, z_n) \|_{(X_\infty)_n} = \dis\sup_{2 \le k \le n} \frac{1}{\sqrt{\log(k)}} \left( \dis\sum_{j=1}^k |z_j^*|^2 \right)^{1/2} .
\]
For any $ q \ge 2$ and $z \in \CC^n$ it holds
\[
\left( \dis\sum_{j=1}^k |z_j^*|^2 \right)^{1/2} = \| (z_1^*, \ldots, z_k^*) \|_2 \le k^{\frac{1}{2} - \frac{1}{q}} \|  (z_1^*, \ldots, z_k^*) \|_q \le k^{\frac{1}{2}  - \frac{1}{q}} \| z \|_q,
\]
then  $ \| z \|_{(X_\infty)_n} \le \dis\sup_{2 \le k \le n} \frac{k^{\frac{1}{2} - \frac{1}{q}}}{\sqrt{\log(k)}} \| z \|_q $. As $0 \le \frac{1}{2} - \frac{1}{q} $ then $\frac{n^{\frac{1}{2} - \frac{1}{q}}}{\sqrt{\log(n)}} \underset{n \to \infty}{\rightarrow} \infty$, and there is some $C = C(q)$ such that $\frac{k^{\frac{1}{2} - \frac{1}{q}}}{\sqrt{\log(k)}} \le C \frac{n^{\frac{1}{2} - \frac{1}{q}}}{\sqrt{\log(n)}}$ for every $2\le k \le n$. Then we have
\begin{equation}\label{norm id q >2}
\| id : \ell_q^n \to (X_\infty)_n \| \le C \frac{n^{\frac{1}{2} - \frac{1}{q}}}{\sqrt{\log(n)}}.
\end{equation}

\begin{proof}[Proof  of the case $ 2 \le q,p$ on Theorem \ref{maintheorem}.]

Thanks to the the fact that $B_{X_\infty} \subset \mon H_{\infty}(B_{\ell_\infty})$ and Theorem \ref{general conv mon} we have that there is some $C>0$ such that for every polynomial $P(z)= \displaystyle\sum_{ \alpha \in \Lambda(m,n)} a_\alpha z^\alpha$ and for every $z \in (X_\infty)_n$,
\[
\dis\sum_{\alpha \in \Lambda(m,n)} | a_\alpha| |z^\alpha| \le C^m \| z \|_{(X_\infty)_n}^m \| P \|_{\Pp(^m \ell_\infty^n)}.
\]
Thus for every $z \in \CC^n$ using \eqref{norm id q >2} we have
\[
\dis\sum_{\alpha \in \Lambda(m,n)} | a_\alpha| |z^\alpha| \le C^m \| z \|_{\ell_q^n}^m \left( \frac{1}{\sqrt{\log(n)}} n^{\frac{1}{2} - \frac{1}{q}} \right)^m \| P \|_{\Pp(^m \ell_\infty^n)}.
\]
For any $2\le p<\infty$, we have that $ \| P \|_{\Pp(^m \ell_\infty^n)} \le n^{m/p}  \| P \|_{\Pp(^m \ell_p^n)}$, and then
\[
\dis\sum_{\alpha \in \Lambda(m,n)} | a_\alpha| |z^\alpha| \le C^m\| z \|_{\ell_q^n}^m \left( \frac{1}{\sqrt{\log(n)}} n^{\frac{1}{2} + \frac{1}{p}- \frac{1}{q}} \right)^m \| P \|_{\Pp(^m \ell_p^n)},
\]
which implies
\[ \chi_M(\Pp(^m {\ell_p^n}),\Pp(^m {\ell_q^n}))^{1/m} \ll \frac{n^{\frac{1}{2} +\frac{1}{p} - \frac{1}{q}}}{\sqrt{\log(n)}}.
\]
Therefore, by Lemma \ref{Bohr vs unc}
\[
K(B_{\ell_p^n},B_{\ell_q^n})  \gg \frac{\sqrt{\log(n)}}{n^{\frac{1}{2}+ \frac{1}{p} -\frac{1}{q}}},
\]
as we wanted to prove.
\end{proof}

To complete the study of the mixed Bohr radius for $p \ge 2$ it remains to understand the case $ \frac{1}{2} \le \frac{1}{q} \le \frac{1}{2} + \frac{1}{p}$.

\begin{remark}\label{rem reinhardt vs infty}
For every Reinhardt domain $\Rr \subset \CC^n$, if $P \in \Pp(^m \CC^n) $ and $w \in \Rr$ if we define $P_w \in \Pp(^m \CC^n)$ as $P_w(z) = P(w \cdot z)$, it follows
\[
\| P_w \|_{\Pp(^m \ell_\infty^n)} \le \dis\sup_{z \in \Rr} | P(z)|,
\]
and $ a_\alpha(P_w) = a_\alpha(P) w^\alpha $. 
\end{remark}

\begin{proof}[Proof  of the case $\frac{1}{2} \le \frac{1}{q} < \frac{1}{2} + \frac{1}{p}$ and $p \ge 2$  on Theorem \ref{maintheorem}.]

Fix $m \in \NN$ and take  $P\in \Pp(^m \CC^n)$, $P(z) = \dis\sum_{\alpha \in \Lambda(m,n)} a_\alpha z^\alpha  $. By Lemma \ref{Bohr vs unc}, it suffices to show that there exists some $C(p,q) > 0$ such that for every $z \in B_{\ell_q^m}$ it holds
\[
\dis\sum_{ \alpha \in \Lambda(m,n)} |a_\alpha z^\alpha |  \le C(p,q)^m \left( \frac{n^{\frac{1}{2}+ \frac{1}{p} -\frac{1}{q}}}{\sqrt{\log(n)}} \right)^m \| P \|_{\Pp(^m \ell_p^n)}.
\]

Consider now $y = (z_1^{\frac{p}{p+2}}, \ldots, z_n^{\frac{p}{p+2}})$ and $w = (z_1^{\frac{2}{p+2}}, \ldots, z_n^{\frac{2}{p+2}})$. It is easy to see that $ z = y \cdot w = (y_1 w_1, \ldots, y_n w_n)$, and thus, by \eqref{incond X infinito} and Remark \ref{rem reinhardt vs infty}, we have
\begin{align*}
\dis\sum_{ \alpha \in \Lambda(m,n)} |a_\alpha z^\alpha |
	& = \dis\sum_{ \alpha \in \Lambda(m,n)} |a_\alpha w^\alpha y^\alpha | \\
	& \le C^m \| y \|_{(X_\infty)_n}^m \| P_w \|_{\Pp(^m \ell_{\infty}^n)} \\
	& \le C^m \| y \|_{(X_\infty)_n}^m \| w \|_{\ell_p^n}^m \| P \|_{\Pp(^m \ell_p^n)}.
\end{align*}
It remains to check that
\[
\| y \|_{(X_\infty)_n} \| w \|_{\ell_p^n} \le C(p,q) \frac{n^{\frac{1}{2}+ \frac{1}{p} -\frac{1}{q}}}{\sqrt{\log(n)}}.
\]
To start let $1 \le k \le n$ then
\begin{align*}
\| (y_1^*, \ldots, y_k^*) \|_{\ell_{2}^k}
	& = \|(z_1^*, \ldots, z_k^*) \|_{\ell_{\frac{2p}{p+2}}^k}^{\frac{p}{p+2}} \\
	& \le \left( \|(z_1^*, \ldots, z_k^*) \|_{\ell_q^k} k^{\frac{1}{p}+\frac{1}{2} - \frac{1}{q}} \right)^{\frac{p}{p+2}} \\
	& \le \| z \|_{\ell_q^n}^{\frac{p}{p+2}} \left(  k^{\frac{1}{p}+\frac{1}{2} - \frac{1}{q}} \right)^{\frac{p}{p+2}},
\end{align*}
so we have
\begin{align*}
\| y \|_{(X_\infty)_n}
	& = \dis\sup_{2 \le k \le n} \frac{1}{\sqrt{\log(k)}} \| (y_1^*, \ldots, y_k^*) \|_{\ell_{2}^k} \\
	& \le \dis\sup_{2 \le k \le n} \frac{1}{\sqrt{\log(k)}} \| z \|_{\ell_q^n}^{\frac{p}{p+2}} \left(  k^{\frac{1}{p}+\frac{1}{2} - \frac{1}{q}} \right)^{\frac{p}{p+2}} \\
	& \le \dis\sup_{2 \le k \le n} \| z \|_{\ell_q^n}^{\frac{p}{p+2}} C(p,q) \frac{1}{\sqrt{\log(n)}} n^{\left( \frac{1}{p}+\frac{1}{2} - \frac{1}{q} \right)\frac{p}{p+2} }  \\
	&  C(p,q) \| z \|_{\ell_q^n}^{\frac{p}{p+2}} \frac{1}{\sqrt{\log(n)}} n^{\frac{1}{2} - \frac{1}{q}\frac{p}{p+2}}.
\end{align*}
On the other hand,
\begin{align*}
\| w \|_{\ell_p^n}
	& = \| z \|_{\ell_{\frac{2p}{p+2}}^n}^{\frac{2}{2+p}} \\
	& \le \| z \|_{\ell_q^n}^{\frac{2}{2+p}} n^{\left( \frac{1}{2} + \frac{1}{p} - \frac{1}{q} \right)\frac{2}{2 + p}} \\
	& = \| z \|_{\ell_q^n}^{\frac{2}{2+p}} n^{\frac{1}{p} - \frac{1}{q} \frac{2}{2 + p}}.
\end{align*}
Finally,
\begin{align*}
\| y \|_{(X_\infty)_n} \| w \|_{\ell_p^n}
	& \le C(p, q) \| z \|_{\ell_q^n}^{\frac{p}{p+2}} \frac{1}{\sqrt{\log(n)}} n^{\frac{1}{2} - \frac{1}{q}\frac{p}{p+2}} \| z \|_{\ell_q^n}^{\frac{2}{2+p}} n^{\frac{1}{p} - \frac{1}{q} \frac{2}{2 + p}} \\
	& = C(p, q) \| z \|_{\ell_q^n} \frac{n^{\frac{1}{2} +\frac{1}{p} - \frac{1}{q}}}{\sqrt{\log(n)}},
\end{align*}
as we needed.
\end{proof}

%
%
%
%
%

\section*{Acknowledgements}
We thank our friend Pablo Sevilla-Peris for his enormous generosity, the encouragement provided and for the various conversations we had around this topic.

\newcommand{\etalchar}[1]{$^{#1}$}

\end{document}